\pgfplotsset{compat=1.10}
\theoremstyle{plain}
\newtheorem{theo}{Theorem}[section]
\newtheorem{theoA}{Theorem}
\newtheorem{prop}[theo]{Proposition}
\newtheorem{lemma}[theo]{Lemma}
\newtheorem{lemmaA}[theoA]{Lemma}
\newtheorem{coro}[theo]{Corollary}
\theoremstyle{definition}
\newtheorem{rem}[theo]{Remark}
\newcommand{\C}{\mathbb{C}}
\newcommand{\D}{\mathbb{D}}
\newcommand{\T}{\mathbb{T}}
\newcommand{\Hol}{\mathcal{H}(\mathbb{D})}
\newcommand{\Lim}{\mathcal{L}}
\newcommand{\Conv}{\mathcal{C}}
\newcommand{\Ima}{\operatorname{Im}}
\def\noi{\noindent}
\def\me{\medskip}
\def\sm{\smallskip}
\def\al{\alpha}
\def\ri{\rightarrow}
\title{On the convolution of convex $2$-gons}
\date{\today}
\author[M. Chuaqui]{Martin Chuaqui}
\address{Martin Chuaqui, Facultad de Matemáticas, Pontificia Universidad Católica de Chile, Casilla 306, Santiago 22, Chile}
\email{mchuaqui@mat.uc.cl}
\author[R. Hernández]{Rodrigo Hernández}
\address{Rodrigo Hernández, Facultad de Ingeniería y Ciencias, Universidad Adolfo Ibáñez, Av. Padre Hurtado 750, Viña del Mar, Chile}
\email{rodrigo.hernandez@uai.cl}
\author[A. Llinares]{Adrián Llinares}
\address{Adrián Llinares, Department of Mathematics and Mathematical Statistics, Umeå University, SE-90187 Umeå, Sweden}
\email{adrian.llinares@umu.se}
\author[A. Mas]{Alejandro Mas}
\address{Alejandro Mas, Departamento de Análisis Matemático, Universidad de Valencia, 46100 Burjassot, Spain}
\email{alejandro.mas@uv.es}
\keywords{Convolution, convex mappings, 2-gons}
\subjclass[2020]{30C45}
\begin{document}

\begin{abstract}
We study the convolution of functions of the form
\[
f_\alpha (z) := \dfrac{\left( \frac{1 + z}{1 - z} \right)^\alpha - 1}{2 \alpha},
\]
which map the open unit disk of the complex plane onto polygons of 2 edges when $\alpha\in(0,1)$. We extend results by Cima by studying limits of convolutions of finitely many $f_\al$ and by considering the convolution of arbitrary unbounded convex mappings. The analysis for the latter is based on the notion of {\it angle at infinity}, which provides an estimate for the growth at infinity and determines whether the convolution is bounded or not. A generalization to an arbitrary number of factors shows that the convolution of $n$ randomly chosen unbounded convex mappings has a probability of $1/n!$ of remaining unbounded. We also extend Cima's analysis on the coefficients of the functions $f_\alpha$ by providing precise asymptotic behavior for all $\alpha$.
\end{abstract}

\maketitle

\section{Introduction}

As usual, let $\D$ and $\T$ be the open unit disk and the unit circle in the complex plane, respectively. If $f$ and $g$ are two holomorphic functions in $\D$, we define their \emph{convolution} as
\[
(f * g)(z) := \dfrac{1}{2 \pi i} \int_{|\zeta| = \frac{1 + |z|}{2}} f(\zeta) g \left( z \zeta^{-1} \right) \, \dfrac{d\zeta}{\zeta}.
\]
Moreover, it can be checked that
\[
 (f * g) (z) = \sum_{n = 0}^\infty a_n b_n z^n,
\]
where $\{ a_n \}_{n \geq 0}$ and $\{ b_n \}_{n \geq 0}$ denote the sequences of Taylor coefficients of $f$ and $g$, so the convolution is also known as \emph{Hadamard product} in the bibliography.

       The convolution of functions satisfying certain geometric properties has been extensively studied (see for instance \cite{MR0674296} or Chapter~14 of \cite{MR0704184} for some general references). For this paper, it is especially relevant the following result, which was originally conjectured by P\'olya and Schoenberg \cite[Appendix~II]{MR0100753} and was proved by Ruscheweyh and Sheil-Small \cite{MR0328051}.

\begin{theoA}[Ruscheweyh, Sheil-Small] \label{thm:RushShei}
Let $f$ and $g$ belong to the class $\Conv$ of univalent functions in $\D$ whose ranges are convex sets. Then, $f * g \in \Conv$ as well.
\end{theoA}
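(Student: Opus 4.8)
The plan is to deduce the theorem from the central convolution theorem of Ruscheweyh and Sheil-Small, after a formal reduction to a statement about starlike functions. By the affine invariance of the convex class and the bilinearity of the convolution, I would first normalize so that $f(0)=g(0)=0$ and $f'(0)=g'(0)=1$; this changes $f*g$ only by an affine map and hence does not affect convexity. Recall that a normalized $f$ is convex precisely when $\Rea\left(1+\frac{zf''(z)}{f'(z)}\right)>0$ on $\D$, equivalently (Alexander's theorem) when $zf'$ is starlike. The first key observation is the convolution--differentiation identity $z(f*g)'=f*(zg')$, which is immediate on Taylor coefficients. Writing $F=f*g$, convexity of $F$ is equivalent to $zF'=f*(zg')$ being starlike. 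Since $g\in\Conv$ forces $\phi:=zg'$ to be starlike (i.e.\ $\phi\in\mathcal{S}^{*}$), the whole theorem reduces to the single assertion that the convolution of a convex function with a starlike function is again starlike.

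To prove that reduced statement, I would invoke the central convolution theorem: if $f\in\Conv$ and $\phi\in\mathcal{S}^{*}$, then for every $H\in\Hol$ one has
\[
\frac{f*(H\phi)}{f*\phi}(z)\in\overline{\operatorname{co}}\,H(\D),\qquad z\in\D.
\]
I would apply it with $H=z\phi'/\phi$. Because $\phi$ is starlike, $\Rea H>0$, so $H(\D)$ lies in the right half-plane, a convex set; hence the quotient $\frac{f*(z\phi')}{f*\phi}=\frac{z(f*\phi)'}{f*\phi}$ takes values in the closed right half-plane. As its value at the origin equals $1$, the minimum principle for harmonic functions upgrades this to the strict inequality $\Rea\frac{z(f*\phi)'}{f*\phi}>0$, which says exactly that $f*\phi=zF'$ is starlike, i.e.\ $F=f*g\in\Conv$.

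The substance of the argument, and the main obstacle, is therefore the central convolution theorem itself. The plan for it is: using the Herglotz representation of functions with positive real part, together with the linearity of the convolution and the fact that membership in a closed convex set can be tested against its supporting half-planes, reduce to the case of a single kernel $H(z)=\frac{1+\overline{\xi}z}{1-\overline{\xi}z}$ with $\xi\in\T$. For such $H$ the desired containment becomes the non-vanishing of an explicit expression in $f$ and $\phi$. This is precisely what the Ruscheweyh--Sheil-Small characterizations deliver: both $\phi\in\mathcal{S}^{*}$ and $f\in\Conv$ are equivalent to the non-vanishing of $\frac{1}{z}\bigl(\,\cdot * k_{x,y}\bigr)$ throughout $\D$ for an explicit two-parameter family of rational kernels $k_{x,y}$ built from $\frac{z}{(1-xz)(1-yz)}$, with $x,y\in\T$. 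The decisive point is that convolving with a \emph{convex} function preserves these non-vanishing conditions, because such a convolution acts as averaging against a probability measure on $\T$; this is where convexity of $f$ is genuinely used and constitutes the technical heart of the proof. Establishing those characterizations and their stability under convolution is the hard part, whereas the reductions above are purely formal.
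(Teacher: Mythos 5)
A preliminary remark: the paper does not prove Theorem~\ref{thm:RushShei} at all --- it is quoted as background and attributed to Ruscheweyh and Sheil-Small \cite{MR0328051} --- so your proposal can only be compared with that classical argument. Your formal reductions reproduce it correctly: the affine normalization, Alexander's theorem, the coefficient identity $z(f*g)'=f*(zg')$ reducing everything to the assertion $\Conv * \mathcal{S}^{*}\subset\mathcal{S}^{*}$, and the deduction of that assertion from the quotient theorem $\frac{f*(H\phi)}{f*\phi}(\D)\subset\overline{\operatorname{co}}\,H(\D)$ applied to $H=z\phi'/\phi$, including the supporting-half-plane and Herglotz reduction to the single kernels $\frac{1+\overline{\xi}z}{1-\overline{\xi}z}$. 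Granting the quotient theorem, this is a complete and correct derivation, and it is exactly how the corollary is drawn in the literature.

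The genuine gap sits in the only step with real content, and the justification you offer for it demonstrably fails. You assert that the decisive non-vanishing conditions are preserved under convolution with $f$ ``because such a convolution acts as averaging against a probability measure on $\T$.'' The representation itself is correct: every $f$ in the closed convex hull of $\Conv$ satisfies $f(z)=\int_{\T}\frac{z}{1-xz}\,d\mu(x)$ for some probability measure $\mu$, whence $(f*h)(z)=\int_{\T}\overline{x}\,h(xz)\,d\mu(x)$. But this property cannot be the reason the theorem holds, because it is shared by every member of $\overline{\operatorname{co}}(\Conv)$, for which the conclusion is false. Concretely, $f(z)=\frac{z}{1-z^{2}}=\frac{1}{2}\bigl[\frac{z}{1-z}+\frac{z}{1+z}\bigr]$ is an average of two half-plane mappings, and for the Koebe function $\phi(z)=\frac{z}{(1-z)^{2}}\in\mathcal{S}^{*}$ one gets $(f*\phi)(z)=zf'(z)$, which is \emph{not} starlike (by Alexander's theorem, since this $f$ is not convex); hence the non-vanishing conditions characterizing starlikeness of $f*\phi$ must fail for this average. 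More simply put, averages of zero-free functions need not be zero-free. So any proof that uses convexity of $f$ only through the averaging representation is doomed; this is precisely the obstruction that kept the P\'olya--Schoenberg conjecture open for fifteen years. What Ruscheweyh and Sheil-Small actually establish is a sharper fact special to convex $f$: by a continuity (``sweeping'') argument in the parameters $x,y\in\overline{\D}$, the quotients $\left(f*\frac{z}{(1-xz)(1-yz)}\right)/\left(f*\frac{z}{(1-yz)^{2}}\right)$ take values in the half-plane $\Rea w>\frac{1}{2}$; note also that these kernels appear in that lemma, not as dual characterizations of $\mathcal{S}^{*}$ and $\Conv$, whose test families are different rational kernels. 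Filling in your ``technical heart'' is therefore not a verification to be deferred: it is the theorem itself, and the mechanism you propose for it does not work.
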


In this paper we will focus on the mappings
\[
f_\alpha (z) := \dfrac{\left( \frac{1 + z}{1 - z} \right)^\alpha - 1}{2 \alpha}, \quad 0 < \alpha < 1.
\]
The mappings $f_\alpha$ are univalent and $f_\alpha(\D)$ corresponds to the convex $2$-gon with vertex at $-\frac{1}{2\alpha}$ and edges with directions $e^{\frac{\alpha \pi}{2}i}$ and $e^{-\frac{\alpha \pi}{2}i}$.

 \noi
If we allow $\alpha$ to assume the value 1, it is clear that
\[
 f_1 (z) = \dfrac{z}{1-z} = \sum_{k = 1}^\infty z^k,
\]
so $f_1$ turns out to be the identity element for the convolution in the subspace of analytic functions vanishing at the origin. On the other hand, taking limits when $\alpha \rightarrow 0$ we get that
\[
f_0 (z) := \lim_{\alpha \rightarrow 0^+} f_\alpha (z) = \dfrac{1}{2} \log \left( \frac{1 + z}{1 - z} \right),
\]
which is again one-to-one and $f_0 (\D) = \{ z \in \C : |\Ima z| < \frac{\pi}{4} \}$.

\me

Much of the motivation of the present paper stems from Cima's paper \cite{MR499120} studying the convolution $f_\alpha * f_\beta$. We will collect several of the results therein and bring attention to his comment on the technical difficulties faced in the analysis. The paper is organized as follows. In Section \ref{sec:Growth2gons}, we extend the results in \cite{MR499120} by giving precise asymptotic behavior for the coefficients of the function $f_\alpha$. We determine when $f_\alpha*f_\beta$ is bounded or not for all choices of the parameters, and study all possible limits of the convolution of finitely many mappings $f_\alpha$.
In Section \ref{sec:GeneralFunct}, we recall the notion of {\it angle at infinity} and use it to establish bounds for the growth of general unbounded convex mappings. From this, we find the probability that the convolution of finitely many randomly chosen unbounded convex mappings is bounded.

\subsection*{Acknowledgements}

We would like to thank I. Efraimidis for his careful reading and helpful comments.

The work of the third author was funded by Grant 275113 of the Research Council of Norway through the Alain Bensoussan Fellowship Programme from ERCIM, and his current research is funded by the postdoctoral scholarship JCK22-0052 from The Kempe Foundations. The fourth author is partially supported by Grant PID2022-136619NB-I00 from Ministerio de Ciencia e Innovación. The third and fourth authors are partially supported by Grant PID2019-106870GB-I00 from Ministerio de Ciencia e Innovación.

Part of this research was conducted during the stay of A.~Llinares and A.~Mas at the Pontificia Universidad Católica de Chile and the Universidad Adolfo Ibáñez. These visits as well as the work of the first two authors were partially funded by Fondecyt Grants 1190380 and 1190756.

\section{Growth of convolutions of 2-gons} \label{sec:Growth2gons}

\subsection{Asymptotic estimates}

For any $\alpha \in (0,1)$, let $\{ g_n (\alpha) \}_{n \geq 0}$ be the sequence of Taylor coefficients of $f_\alpha$. From the power series of $F_\alpha (z) := \left( \frac{1 + z}{1 - z}\right)^\alpha$, it can be seen that
\[
g_n (\alpha) = \dfrac{1}{2 \alpha} \sum_{k = 1}^n 2^k \binom{\alpha}{k} \binom{n - 1}{n - k}, \quad n \geq 2,
\]
where $\binom{a}{b} = \frac{\Gamma(a + 1)}{\Gamma(b + 1) \Gamma (a - b + 1)}$ and $\Gamma$ denotes the Euler Gamma function. However, we have found more useful to follow Cima's approach \cite{MR499120} of treating $f_\alpha$ as the solution of the differential equation
\[
 f_\alpha'(z) = \dfrac{2\alpha f_\alpha (z) + 1}{1 - z^2}, \quad \forall z \in \D,
\]
and $f_\alpha (0) = 0$. Since $g_0(\alpha) = 0$ and $g_1 (\alpha) = 1$, the recursion relation
\begin{equation} \label{eq:Recurrence}
g_{n + 2} (\alpha) = \dfrac{2 \alpha g_{n + 1}(\alpha) + n g_{n}(\alpha)}{n + 2}, \quad \forall n \geq 0,
\end{equation}
can be derived and from it we deduce the following lemma.

\begin{lemmaA}[Cima \cite{MR499120}] \label{LemmaCima}
The sequence $\{ g_n (\alpha) \}_{n \geq 0}$ has the following properties:

\sm
\noi
(i) For every $n \geq 1$, $g_n(\alpha)$ is a polynomial of degree $n-1$ with non-negative coefficients.
In particular, $0 < g_n(\alpha) < 1$ for all $\alpha \in (0,1)$ and $n \geq 2$.

\sm
\noi
(ii) The subsequences $\{g_{2m} (\alpha) \}_{m \geq 1}$ and $\{ g_{2m-1} (\alpha) \}_{m \geq 1}$ are decreasing. If $\alpha \in \left( \frac{1}{2}, 1\right)$, then the chain of inequalities
\[
g_{n} (\alpha) > g_{n + 1} (\alpha) > \dfrac{n}{n + 2 - 2\alpha} g_{n} (\alpha)
\]
holds for every $n \geq 2$.

\sm
\noi
(iii) For every $\alpha \in (0,1)$, there exists a constant $C = C (\alpha)$ such that
\begin{equation} \label{eq:UpperBound}
g_n (\alpha) \leq \dfrac{C}{n^{1-\alpha}}, \quad \forall n \geq 0.
\end{equation}
                   \end{lemmaA}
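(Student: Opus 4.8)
The plan is to base everything on the recurrence \eqref{eq:Recurrence} together with the initial data $g_0(\alpha)=0$, $g_1(\alpha)=1$, and to run three essentially independent inductions, one for each part.

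For part (i) I would induct on $n$. Both the degree count and the non-negativity of the coefficients are preserved by \eqref{eq:Recurrence}, since $g_{n+2}(\alpha)$ is formed from $g_{n+1}(\alpha)$ and $g_n(\alpha)$ by multiplying by the non-negative quantities $2\alpha$ and $n$, adding, and dividing by $n+2$; the degree rises from $n$ to $n+1$ because the term $2\alpha g_{n+1}(\alpha)$ is the dominant one, and the base cases $g_1=1$, $g_2=\alpha$ start the induction. For the two-sided bound I would first record that $g_n(1)=1$ for every $n\ge1$ (immediate from \eqref{eq:Recurrence} with $\alpha=1$, or from $f_1(z)=\sum z^k$). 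Writing $g_n(\alpha)=\sum_j c_{n,j}\alpha^j$ with $c_{n,j}\ge0$, the identity $g_n(1)=1$ reads $\sum_j c_{n,j}=1$; since for $n\ge2$ the leading coefficient $c_{n,n-1}>0$ sits at an exponent $\ge1$, evaluating at $\alpha\in(0,1)$ gives $g_n(\alpha)<\sum_j c_{n,j}=1$ and $g_n(\alpha)\ge c_{n,n-1}\alpha^{n-1}>0$.

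For the monotonicity of the subsequences in part (ii), subtracting in \eqref{eq:Recurrence} gives $g_{n+2}(\alpha)-g_n(\alpha)=\frac{2}{n+2}(\alpha g_{n+1}(\alpha)-g_n(\alpha))$, so both subsequences decrease as soon as $D_n:=\alpha g_{n+1}-g_n<0$ for every $n\ge1$. The key trick will be to pair $D_n$ with the auxiliary quantity $E_n:=g_{n+1}-\alpha g_n$: a direct computation from \eqref{eq:Recurrence} yields the clean link $E_{n+1}=-\frac{n}{n+2}D_n$, while a second computation shows $D_{n+1}<0$ whenever $E_n\ge0$ (indeed $E_n\ge0$ means $g_{n+1}/g_n\ge\alpha>\frac{n\alpha}{n+2-2\alpha^2}$, which is exactly the inequality equivalent to $D_{n+1}<0$). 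These two implications interleave, so starting from $D_1=\alpha^2-1<0$ and $E_1=0$, $E_2=\frac{1-\alpha^2}{3}>0$ a two-step induction delivers $E_n\ge0$ and $D_n<0$ for all $n\ge1$, valid for every $\alpha\in(0,1)$. For the sharper chain when $\alpha\in(\tfrac12,1)$ I would instead work with the ratios $r_n:=g_{n+1}(\alpha)/g_n(\alpha)$, which satisfy $r_{n+1}=\frac{2\alpha r_n+n}{(n+2)r_n}$, and prove $\frac{n}{n+2-2\alpha}<r_n<1$ for $n\ge2$ by induction; the elegant point is that $r_{n+1}<1$ is algebraically equivalent to the lower bound $r_n>\frac{n}{n+2-2\alpha}$, while feeding $r_n<1$ back in reduces the lower bound $r_{n+1}>\frac{n+1}{n+3-2\alpha}$ to $2(1-\alpha)(1-2\alpha)<0$, which holds precisely because $\alpha\in(\tfrac12,1)$. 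The base case $n=2$ is a direct check with $g_3=\frac{2\alpha^2+1}{3}$.

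For the growth estimate \eqref{eq:UpperBound} in part (iii) I would prove $g_n(\alpha)\le Cn^{\alpha-1}$ by induction with the ansatz itself: inserting $g_{n+1}\le C(n+1)^{\alpha-1}$ and $g_n\le Cn^{\alpha-1}$ into \eqref{eq:Recurrence} reduces the inductive step to the purely analytic inequality $(n+2)^\alpha-n^\alpha\ge 2\alpha(n+1)^{\alpha-1}$. This is where I expect the real difficulty: the leading term $2\alpha n^{\alpha-1}$ on both sides cancels, so a crude estimate is inconclusive. I would resolve it by reading the inequality, with $m=n+1$, as $\phi(m+1)-\phi(m-1)\ge 2\phi'(m)$ for $\phi(t)=t^\alpha$; since $\phi'(t)=\alpha t^{\alpha-1}$ is convex on $(0,\infty)$ (its third derivative $\alpha(\alpha-1)(\alpha-2)t^{\alpha-3}$ is positive for $\alpha\in(0,1)$), midpoint convexity gives $\phi'(m+s)+\phi'(m-s)\ge 2\phi'(m)$, and integrating over $s\in[0,1]$ yields the inequality for all $m\ge2$. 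The inductive step then closes for every $n\ge1$, and taking $C$ large enough to absorb the two base values $g_1,g_2$ finishes the proof. The delicate cancellation in this last inequality, rescued exactly by the convexity of $t\mapsto t^{\alpha-1}$, is the crux of the whole lemma.
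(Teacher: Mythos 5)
Your proof is correct, and it follows precisely the route the paper itself indicates: the paper offers no proof of this lemma (it is quoted from Cima's work), noting only that it can be deduced from the recurrence \eqref{eq:Recurrence} together with $g_0(\alpha)=0$, $g_1(\alpha)=1$, which is exactly what you do. All three of your inductions check out — the interleaving of $D_n=\alpha g_{n+1}-g_n$ and $E_n=g_{n+1}-\alpha g_n$ via the identity $E_{n+1}=-\tfrac{n}{n+2}D_n$ does yield $D_n<0$ for all $n\ge 1$; the ratio induction for $\alpha\in\left(\tfrac12,1\right)$ reduces correctly to $(2\alpha-1)(\alpha-1)<0$; and the inductive step for \eqref{eq:UpperBound} reduces to $(n+2)^\alpha-n^\alpha\ge 2\alpha(n+1)^{\alpha-1}$, which your argument via convexity of $t\mapsto \alpha t^{\alpha-1}$ (positive third derivative of $t^\alpha$ for $0<\alpha<1$) legitimately establishes.
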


\noi
Observe that, in particular, this last inequality yields that $f_\alpha * f_\beta$ is bounded if $\alpha + \beta < 1$, as was explicitly mentioned in \cite[Theorem~2]{MR499120}. Our aim is to complement this theorem by considering the case $\alpha + \beta \geq 1$, showing in fact that a lower bound of the same type as in \eqref{eq:UpperBound} is also true.

\begin{theo} \label{TheoConvergence}
The sequence $\{n^{1-\alpha} g_n (\alpha) \}_{n \geq 1}$ is uniformly convergent in intervals of the form $(\alpha_0, 1)$, $0 < \alpha_0 < 1$.
\end{theo}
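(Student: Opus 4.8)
The plan is to compare $g_n(\alpha)$ with the Taylor coefficients of the ``pure'' singular factor $(1-z)^{-\alpha}$. Let $b_n(\alpha)$ denote the $n$-th Taylor coefficient of $(1-z)^{-\alpha}$, that is $b_n(\alpha)=\binom{n+\alpha-1}{n}=\frac{\Gamma(n+\alpha)}{\Gamma(\alpha)\,\Gamma(n+1)}$, so that $b_{n+1}=\frac{n+\alpha}{n+1}b_n$ and, by Stirling's formula, $n^{1-\alpha}b_n(\alpha)\to 1/\Gamma(\alpha)$ uniformly on $[\alpha_0,1]$. Since $n^{1-\alpha}g_n=(n^{1-\alpha}b_n)\,q_n$ with $q_n:=g_n/b_n$, and the first factor already converges uniformly and is bounded, it suffices to prove that $q_n(\alpha)$ converges uniformly on $(\alpha_0,1)$. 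Substituting $g_m=q_m b_m$ into \eqref{eq:Recurrence} and simplifying with $b_{n+2}=\frac{(n+\alpha)(n+1+\alpha)}{(n+1)(n+2)}b_n$ yields the clean recursion
\[
q_{n+2}=A_n\,q_{n+1}+B_n\,q_n,\qquad A_n:=\frac{2\alpha}{n+1+\alpha},\quad B_n:=\frac{n(n+1)}{(n+\alpha)(n+1+\alpha)},
\]
whose decisive feature is the identity $A_n+B_n=1-\delta_n$ with $\delta_n:=\frac{\alpha(1-\alpha)}{(n+\alpha)(n+1+\alpha)}$. As $0\le\alpha(1-\alpha)\le\frac14$, the defect $\delta_n$ is summable, uniformly in $\alpha\in(0,1)$.

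First I would record the elementary consequences. Since $A_n,B_n>0$ and $A_n+B_n<1$, the quantity $\max(q_n,q_{n+1})$ is non-increasing in $n$; as $q_1=1/\alpha\ge q_2=2/(1+\alpha)$, this gives the uniform bound $0<q_n(\alpha)\le 1/\alpha_0$ on $[\alpha_0,1]$. Next, for the consecutive differences $d_n:=q_{n+1}-q_n$ the recursion gives, using $1-A_n=B_n+\delta_n$,
\[
d_{n+1}=-B_n\,d_n-\delta_n\,q_{n+1}.
\]
The alternating factor $-B_n$ is precisely what makes $q_n$ oscillate between its two parities, so the heart of the matter is to show that this oscillation is damped. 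I would solve the relation with the integrating factor $P_n:=\prod_{k=1}^{n-1}B_k$: writing $d_n=(-1)^{n-1}P_n x_n$ reduces it to $x_{n+1}-x_n=(-1)^{n+1}\delta_n q_{n+1}/P_{n+1}$. A telescoping Gamma-product computation shows $P_n\sim c(\alpha)\,n^{-2\alpha}$ with $c(\alpha)>0$ continuous, hence bounded below on $[\alpha_0,1]$; combined with $\delta_n q_{n+1}=O(n^{-2})$ this bounds $|x_{n+1}-x_n|$ by $C(\alpha_0)\,n^{2\alpha-2}$. Summing and multiplying back by $P_n$ shows, in all three regimes $\alpha\lessgtr\tfrac12$, that $d_n\to 0$ at a uniform polynomial rate, say $|d_n|\le C(\alpha_0)\,n^{-\gamma_0}\log n$ with $\gamma_0=\min(2\alpha_0,1)>0$.

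With $d_n\to 0$ in hand the proof closes quickly. Rearranging the recursion gives $q_{n+2}-q_n=(A_n+\delta_n)d_n-\delta_n q_{n+1}$; since $A_n=O(1/n)$ and $d_n=O(n^{-\gamma_0}\log n)$, the right-hand side is dominated by a uniformly summable sequence, so the even- and odd-indexed subsequences of $\{q_n\}$ are uniformly Cauchy and therefore converge uniformly on $(\alpha_0,1)$; because $d_n\to0$ uniformly their limits coincide, whence $q_n\to L(\alpha)$ uniformly. Multiplying by the uniformly convergent bounded factor $n^{1-\alpha}b_n$ gives the assertion, and as a by-product one recovers the explicit limit $n^{1-\alpha}g_n(\alpha)\to 2^{\alpha-1}/\Gamma(\alpha+1)$, consistent with the value $g_n(1)\equiv1$ at the endpoint. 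I expect the single genuine obstacle to be the middle step: quantifying the damping of the parity oscillation, i.e.\ the estimate $d_n\to 0$ via the product $P_n$, together with the bookkeeping needed to keep every constant uniform up to the endpoint $\alpha_0$. The same leading asymptotics, and an independent check of the limit constant, also follow from singularity analysis applied to $F_\alpha(z)=(1+z)^\alpha(1-z)^{-\alpha}$, whose dominant singularity at $z=1$ is $2^\alpha(1-z)^{-\alpha}$; but making those transfer estimates uniform in $\alpha$ appears no less delicate than the recursive argument.
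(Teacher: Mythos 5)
Your proof is correct, and it takes a genuinely different route from the paper's. The paper works directly with $G_n(\alpha)=n^{1-\alpha}g_n(\alpha)$, estimates $G_{n+2}-G_n$ from \eqref{eq:Recurrence}, and leans on Lemma \ref{LemmaCima}: for $\alpha\geq\tfrac12$ the monotonicity inequality (ii) controls $|g_{n+1}(\alpha)-g_n(\alpha)|$, and for $\alpha<\tfrac12$ that control is transferred via the reflection $H_\alpha(z)=(1-z)F_\alpha(z)=H_{1-\alpha}(-z)$ together with the bound (iii). You instead normalize by the coefficients $b_n(\alpha)$ of $(1-z)^{-\alpha}$ and exploit the structure of the recursion for $q_n=g_n/b_n$. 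I have checked the key computations: the identity $A_n+B_n=1-\delta_n$ with $\delta_n=\alpha(1-\alpha)/\big((n+\alpha)(n+1+\alpha)\big)$, the difference relation $d_{n+1}=-B_n d_n-\delta_n q_{n+1}$, the maximum-principle bound $0<q_n\leq 1/\alpha_0$, the Gamma-product evaluation $P_n=\Gamma(n)\Gamma(n+1)\Gamma(1+\alpha)\Gamma(2+\alpha)/\big(\Gamma(n+\alpha)\Gamma(n+1+\alpha)\big)\asymp n^{-2\alpha}$ (two-sided and uniform on $[\alpha_0,1]$, e.g.\ by Wendel's inequality for Gamma ratios), and the summation estimates, where your $\log n$ factor correctly absorbs the blow-up of $\sum_k k^{2\alpha-2}$ near $\alpha=\tfrac12$; all steps close, giving $|d_n|\lesssim n^{-\gamma_0}\log n$, uniform Cauchyness of both parities, and hence the theorem. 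Your approach is self-contained (it never invokes Cima's monotonicity lemma, since positivity of $g_n$ already follows from the recursion), it avoids the case split and reflection trick at $\alpha=\tfrac12$, and it yields an explicit uniform rate of convergence; the paper's argument, by contrast, is shorter once Lemma \ref{LemmaCima} is in hand and needs no auxiliary sequence. One caveat: your closing claim that the explicit limit $2^{\alpha-1}/\Gamma(\alpha+1)$ is recovered ``as a by-product'' overstates matters --- your argument identifies the limit only as $\Gamma(\alpha)^{-1}\lim_n q_n$, and evaluating that number still requires the Abelian comparison with $(1-r)^\alpha f_\alpha(r)$ as $r\to 1^-$ used in the paper's Theorem \ref{thm:LimitCoeff}; this does not affect the statement actually being proved, which asserts uniform convergence only.
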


\begin{proof}
To simplify the notation, we let $G_n (\alpha):= n^{1 - \alpha} g_n (\alpha)$, $n \geq 1$. It follows from \eqref{eq:Recurrence} that
\[
 G_{n + 2} (\alpha) - G_n (\alpha) = \dfrac{2\alpha g_{n+1} (\alpha) + n \big(1 - (1 + 2/n)^\alpha \big) g_n (\alpha)}{(n + 2)^\alpha}.
\]
Since $\alpha \in (0,1)$, we have that
\[
-\dfrac{2\alpha}{n} + \dfrac{2\alpha (1-\alpha)}{n^2} \geq 1 - \left(1 + \dfrac{2}{n} \right)^\alpha \geq -\dfrac{2\alpha}{n}, \quad \forall n \geq 3\]
                    and then
\[
\dfrac{2\alpha \big( g_{n + 1} (\alpha) - g_n (\alpha) \big)}{(n + 2)^\alpha} + \dfrac{2\alpha (1-\alpha) g_n (\alpha)}{n (n+2)^\alpha} \geq G_{n+2} (\alpha) - G_n (\alpha) \geq \dfrac{2\alpha \big( g_{n+1} (\alpha) - g_n (\alpha) \big)}{(n + 2)^\alpha}.
\]
Assume first that $\alpha \geq \frac{1}{2}$. From Lemma \ref{LemmaCima}, we see that
\begin{equation} \label{eq:Aux2}
|g_{n+1} (\alpha) - g_n (\alpha)| = g_n (\alpha) - g_{n+1} (\alpha) < \dfrac{2(1 - \alpha)}{n + 2(1-\alpha)} g_n (\alpha).
\end{equation}
Since $g_k (\alpha)$ is an increasing function of $\alpha$, then $0 < g_k (\alpha) < 1$. Consequently, if $m > j$ we have that
\begin{eqnarray*}
|G_{2m}(\alpha) - G_{2j} (\alpha)| & \leq & \sum_{k = j}^{m - 1} |G_{2k+2} (\alpha) - G_{2k} (\alpha)| \\
& \leq & \sum_{k = j}^m \left( \dfrac{2\alpha | g_{2k+1} (\alpha) - g_{2k} (\alpha) |}{(2k+2)^\alpha} + \dfrac{2\alpha(1 - \alpha) g_{2k} (\alpha)}{2k (2k + 2)^\alpha} \right) \\
& \leq & \alpha(1-\alpha) 2^{1 - \alpha} \sum_{k = j}^m \left( \dfrac{1}{\big(k + 1 - \alpha \big)(k+1)^\alpha} + \dfrac{1}{k (k + 1)^\alpha} \right) \\
& \leq & \sum_{k = j}^m \left(\dfrac{1}{(2k+1)\sqrt{k+1}} + \dfrac{1}{2k \sqrt{k+1}} \right),
\end{eqnarray*}
and hence $\{ G_{2m} (\alpha) \}_{m \geq 1}$ is Cauchy in $m$ uniformly in $\al$, and is therefore uniformly convergent. The same analysis applies to $\{ G_{2m + 1} (\alpha) \}_{m \geq 0}$, and we will show that both limits coincide. Indeed, Lemma \ref{LemmaCima} yields that
\[
G_{2m} (\alpha) \geq \left( \dfrac{2m}{2m+1} \right)^{1 - \alpha} G_{2m+1} (\alpha) \geq \dfrac{m}{m + 1 - \alpha} G_{2m} (\alpha), \quad \forall m \geq 1,
\]
and therefore $\{G_n (\alpha)\}_{n \geq 1}$ is uniformly convergent in $\left( \frac{1}{2}, 1 \right)$.

Now, assume that $0 < \alpha < \frac{1}{2}$. Since $2\alpha \big( g_{n+1} (\alpha) - g_n (\alpha) \big)$, $n \geq 2$, corresponds to the $(n+1)$-th coefficient of $H_\alpha (z):=(1 - z) F_\alpha (z)$, and since
\[
H_\alpha (z) = (1 + z)^\alpha (1 - z)^{1-\alpha} = H_{1-\alpha} (-z),
\]
we see from \eqref{eq:Aux2} and the estimate $g_n (1 - \alpha) = O ( n^{-\alpha} )$ that
\[
|g_{n+1}(\alpha) - g_n (\alpha)| = \dfrac{1-\alpha}{\alpha} |g_{n+1} (1 - \alpha) - g_n (1 - \alpha)| = O \left( \dfrac{1}{n^{1 + \alpha}} \right).
\]
Hence again $\{ G_{2m} (\alpha) \}_{m \geq 1}$ and $\{ G_{2m + 1} (\alpha)\}_{m \geq 0}$ are uniformly convergent on the intervals $\left (\alpha_0, \frac{1}{2} \right]$, $0 < \alpha_0 < \frac{1}{2}$. Finally, both limits must coincide because
\[
|G_{n+1} (\alpha) - G_n (\alpha)| \leq (n+1)^{1-\alpha} \left( \dfrac{2(1-\alpha)}{n + 2\alpha} + 1 - \left( \dfrac{n}{n+1} \right)^{1-\alpha} \right) = O \left( \dfrac{1}{n^\alpha} \right).
\]

\end{proof}

Note that the proof provides additional qualitative information about the limit $L(\alpha)=\lim_{n \rightarrow \infty} G_n (\alpha)$ as a function of $\alpha$. This function must be continuous on $(0,1)$ and it is not difficult to see that it must be strictly positive with $|1 - L(\alpha)| = O(1 - \alpha)$ when $\alpha \rightarrow 1^-$. The following theorem states this more precisely.

\begin{theo} \label{thm:LimitCoeff}
For every $0 < \alpha < 1$, we have that
\begin{equation} \label{eq:Limit}
L(\alpha) = \dfrac{2^{\alpha - 1}}{\Gamma(\alpha + 1)}.
\end{equation}
\end{theo}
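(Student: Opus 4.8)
The plan is to leverage the fact that Theorem~\ref{TheoConvergence} has already secured the \emph{existence} of $L(\alpha)=\lim_{n\to\infty} n^{1-\alpha}g_n(\alpha)$, so that $g_n(\alpha)\sim L(\alpha)\,n^{\alpha-1}$ as $n\to\infty$, and then to pin down the constant by comparing the boundary behavior of the generating function $\sum_{n\geq 1}g_n(\alpha)z^n=f_\alpha(z)$ near the singular point $z=1$ with the behavior forced by this asymptotic. This is an application of the classical Abelian theorem for power series in its easy direction: if $a_n\geq 0$ satisfies $a_n\sim c\,n^{\rho-1}$ with $\rho>0$, then $\sum_{n\geq 1}a_n z^n\sim c\,\Gamma(\rho)\,(1-z)^{-\rho}$ as $z\to 1^-$ along the real axis.

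First I would record the model estimate $\sum_{n\geq 1}n^{\rho-1}z^n\sim\Gamma(\rho)(1-z)^{-\rho}$ (via comparison with $\int_0^\infty t^{\rho-1}e^{-t(1-z)}\,dt$ and $1-e^{-s}\sim s$), and then apply the Abelian estimate with $a_n=g_n(\alpha)$, $\rho=\alpha$, $c=L(\alpha)$, which is legitimate since $g_n(\alpha)>0$ by Lemma~\ref{LemmaCima}. This gives
\[
f_\alpha(z)=\sum_{n\geq 1}g_n(\alpha)z^n\sim L(\alpha)\,\Gamma(\alpha)\,(1-z)^{-\alpha},\qquad z\to 1^-.
\]
On the other hand, the closed form supplies the same boundary behavior directly: writing $f_\alpha(z)=\frac{F_\alpha(z)-1}{2\alpha}$ with $F_\alpha(z)=(1+z)^\alpha(1-z)^{-\alpha}$, and using that $(1+z)^\alpha\to 2^\alpha$ while the additive constant is negligible, one gets $f_\alpha(z)\sim\frac{2^\alpha}{2\alpha}(1-z)^{-\alpha}$ as $z\to 1^-$. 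Matching the coefficient of $(1-z)^{-\alpha}$ in the two expressions forces $L(\alpha)\,\Gamma(\alpha)=\frac{2^\alpha}{2\alpha}$, that is, $L(\alpha)=\frac{2^{\alpha-1}}{\alpha\,\Gamma(\alpha)}=\frac{2^{\alpha-1}}{\Gamma(\alpha+1)}$, which is precisely \eqref{eq:Limit}.

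The only genuinely delicate point is the Abelian step, and even there the work is routine: one splits $\sum a_n z^n=\sum (a_n-c\,n^{\rho-1})z^n+c\sum n^{\rho-1}z^n$ and checks that the error series is $o\big((1-z)^{-\rho}\big)$, which holds because $a_n-c\,n^{\rho-1}=o(n^{\rho-1})$. As a sanity check, the formula yields $L(1)=1$, consistent with $f_1(z)=z/(1-z)$ having all coefficients equal to $1$, and it reproduces the claimed estimate $|1-L(\alpha)|=O(1-\alpha)$ as $\alpha\to 1^-$. A fully self-contained alternative that avoids Abelian theorems is to expand $F_\alpha$ as the Cauchy product of the coefficients of $(1+z)^\alpha$ and $(1-z)^{-\alpha}$, use $\binom{\alpha+j-1}{j}\sim j^{\alpha-1}/\Gamma(\alpha)$ for the second factor, and pass to the limit in the convolution using $\sum_{k}\binom{\alpha}{k}=2^\alpha$; here the main obstacle would be justifying the interchange of limit and summation by a dominated-convergence argument, and it produces the same constant.
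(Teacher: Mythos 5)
Your proof is correct and takes essentially the same route as the paper's: both determine $L(\alpha)$ by an Abelian argument, matching the coefficient asymptotics $g_n(\alpha)\sim L(\alpha)\,n^{\alpha-1}$ (available from Theorem~\ref{TheoConvergence}) against the explicit boundary blow-up $\lim_{r\to 1^-}(1-r)^\alpha f_\alpha(r)=2^{\alpha-1}/\alpha$ coming from the closed form. The only difference is technical: the paper compares $g_n(\alpha)$ with the binomial coefficients $c_n(\alpha)$ of $(1-z)^{-\alpha}$ (whose generating function is known exactly, with the Euler--Gauss formula supplying $c_n(\alpha)\sim n^{\alpha-1}/\Gamma(\alpha)$), whereas you compare directly with $n^{\alpha-1}$ and estimate $\sum_n n^{\alpha-1}z^n$ by an integral, which amounts to the same sandwich.
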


\begin{proof}
On the one hand, we trivially have
\[
\lim_{r \rightarrow 1^-} (1 - r)^\alpha f_\alpha (r) = \dfrac{2^{\alpha-1}}{\alpha}.
\]
On the other hand, observe that $c_n (\alpha) := \frac{\Gamma(n + \alpha)}{\Gamma (\alpha) n!}$ is the $n$-th Taylor coefficient of $(1 - z)^{-\alpha}$, $\alpha > 0$. The Euler-Gauss formula for the Gamma function (see for instance \cite[p.~255]{MR2933135}) implies that
\[
\lim_{n \rightarrow \infty} \dfrac{g_n (\alpha)}{c_n (\alpha)} = \Gamma (\alpha) L(\alpha).
\]
Let $\varepsilon > 0$ and $n_0$ be such that
\[
\Gamma (\alpha) L(\alpha) - \varepsilon \leq \dfrac{g_n (\alpha)}{c_n (\alpha)} \leq \Gamma (\alpha) L(\alpha) + \varepsilon, \quad \forall n \geq n_0.
\]
Then
\[
\lim_{r \rightarrow 1^-} (1 - r)^\alpha f_\alpha (r) = \lim_{r \rightarrow 1^-} (1 - r)^\alpha \sum_{n = n_0}^\infty g_n (\alpha) r^n \in [\Gamma(\alpha) L(\alpha) - \varepsilon, \Gamma(\alpha) L(\alpha) + \varepsilon],
\]
and by letting $\varepsilon\rightarrow 0$ we find that
\[
\Gamma(\alpha) L(\alpha) = \dfrac{2^{\alpha - 1}}{\alpha},
\]
which implies \eqref{eq:Limit}.
\end{proof}

\begin{rem}
Observe that,  $L(1) = 1$ because $g_n (1) = 1$ for all $n$. However, because $\lim_{\alpha \rightarrow 0^+} L( \alpha ) = \frac{1}{2}$, $L$ is only defined on $(0,1]$.
\end{rem}

Following a similar analysis as in the proof of Theorem \ref{thm:LimitCoeff} along with Carath\'eodory theorem \cite[Theorem~2.1]{MR1217706} (the unbounded version of which can apparently be attributed to Torhorst \cite{MR1544451} as well), we can finally state an extension of Theorem 2 in \cite{MR499120}.

\begin{theo} \label{thm:FiniteConv}
For $\alpha, \beta \in (0,1]$ the following hold:

\sm
\noi
(i) If $\alpha + \beta < 1$, then $f_\alpha * f_\beta$ is bounded.

\sm
\noi
(ii) If $\alpha + \beta = 1$, then
\[
\lim_{z \rightarrow 1} \dfrac{(f_\alpha * f_\beta) (z)}{-\log (1 - |z|)} = \dfrac{1}{2 \Gamma (\alpha + 1) \Gamma (\beta + 1)}.
\]

\sm
\noi
(ii) If $\alpha + \beta > 1$, then
 \[
\lim_{z \rightarrow 1} (1 - |z|)^{\alpha + \beta - 1} (f_\alpha * f_\beta) (z) = 2^{\alpha + \beta - 2} \dfrac{\Gamma (\alpha + \beta - 1)}{\Gamma(\alpha + 1) \Gamma (\beta + 1)}.
\]
\end{theo}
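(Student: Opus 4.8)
My plan is to reduce the three statements to the asymptotics of the Taylor coefficients of the convolution and then extract the boundary behaviour by comparison with explicit power series. Writing $(f_\alpha * f_\beta)(z) = \sum_{n \ge 1} g_n(\alpha) g_n(\beta) z^n$ through the Hadamard product, Theorems~\ref{TheoConvergence} and \ref{thm:LimitCoeff} give $g_n(\alpha) = L(\alpha) n^{\alpha - 1}(1 + o(1))$ with $L(\alpha) = 2^{\alpha - 1}/\Gamma(\alpha + 1)$, whence
\[
g_n(\alpha) g_n(\beta) = L(\alpha) L(\beta)\, n^{\alpha + \beta - 2}(1 + o(1)), \qquad L(\alpha) L(\beta) = \frac{2^{\alpha + \beta - 2}}{\Gamma(\alpha + 1)\Gamma(\beta + 1)}.
\]
Part (i) is then immediate: if $\alpha + \beta < 1$ the exponent $\alpha + \beta - 2$ lies below $-1$, so the coefficients are summable (this also follows at once from Lemma~\ref{LemmaCima}(iii)), and $f_\alpha * f_\beta$ extends continuously to $\overline{\D}$ and is in particular bounded.

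For (ii) and (iii) I would run the Abelian ``$\varepsilon$-bracketing'' argument already used in the proof of Theorem~\ref{thm:LimitCoeff}, comparing $g_n(\alpha) g_n(\beta)$ with a reference sequence whose generating function is known near $z = 1$. In case (iii), setting $p := \alpha + \beta - 1 \in (0,1)$, the coefficients $c_n(p) = \Gamma(n + p)/(\Gamma(p)\, n!)$ of $(1 - z)^{-p}$ satisfy $c_n(p) \sim n^{p - 1}/\Gamma(p)$ by the Euler--Gauss formula, so $g_n(\alpha) g_n(\beta)/c_n(p) \to L(\alpha) L(\beta)\Gamma(p)$; in case (ii), where $\alpha + \beta = 1$, the reference is $\sum_{n \ge 1} z^n/n = -\log(1 - z)$ and $n\, g_n(\alpha) g_n(\beta) \to L(\alpha) L(\beta)$. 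Fixing $\varepsilon > 0$ and $n_0$ so the relevant ratio is within $\varepsilon$ of its limit for $n \ge n_0$, one multiplies the tail by $(1 - r)^p$ (resp.\ divides by $-\log(1 - r)$) and uses $(1 - r)^p \sum_{n \ge n_0} c_n(p) r^n \to 1$ (resp.\ the analogue for $-\log(1 - r)$) as $r \to 1^-$, the finitely many initial terms being annihilated in the limit. This brackets the radial limit between its target value $\pm\, \varepsilon$ and, letting $\varepsilon \to 0$, yields
\[
\lim_{r \to 1^-}(1 - r)^{\alpha + \beta - 1}(f_\alpha * f_\beta)(r) = L(\alpha) L(\beta)\Gamma(\alpha + \beta - 1) = 2^{\alpha + \beta - 2}\frac{\Gamma(\alpha + \beta - 1)}{\Gamma(\alpha + 1)\Gamma(\beta + 1)}
\]
in case (iii), and $\lim_{r \to 1^-}(f_\alpha * f_\beta)(r)/(-\log(1 - r)) = L(\alpha) L(\beta) = 1/(2\Gamma(\alpha + 1)\Gamma(\beta + 1))$ in case (ii), matching the stated constants.

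The step I expect to be the main obstacle is upgrading these radial statements to limits as $z \to 1$ throughout $\D$, and this is where Carathéodory's theorem enters. By Theorem~\ref{thm:RushShei} the convolution is convex univalent, so by the Carathéodory--Torhorst boundary correspondence it extends to a homeomorphism of $\overline{\D}$ onto the closure of its image in the Riemann sphere, with $z = 1$ the unique preimage of $\infty$; hence $(f_\alpha * f_\beta)(z) \to \infty$ precisely as $z \to 1$. Since every coefficient is positive we have $|(f_\alpha * f_\beta)(z)| \le (f_\alpha * f_\beta)(|z|)$, which already delivers the correct upper bound for the normalized modulus. The remaining delicacy is to control the direction of approach: one must combine the convex structure of the image near infinity (an asymptotic sector of opening $(\alpha + \beta - 1)\pi$, symmetric about $\mathbb{R}$ because the coefficients are real) with the radial rate just computed to identify the full limit, and it is precisely this geometric input, rather than any further coefficient estimate, that the boundary correspondence is meant to supply.
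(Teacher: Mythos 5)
Your proposal is correct on the radial asymptotics and takes essentially the same route as the paper, whose entire proof of this theorem is the one-line remark that it follows ``by a similar analysis as in the proof of Theorem~\ref{thm:LimitCoeff} along with Carath\'eodory's theorem'': your coefficient asymptotics $g_n(\alpha)g_n(\beta) \sim L(\alpha)L(\beta)n^{\alpha+\beta-2}$ and the $\varepsilon$-bracketing against $(1-z)^{-p}$ and $-\log(1-z)$ are precisely that analysis run in the forward direction, and you invoke the Carath\'eodory--Torhorst extension for the same purpose as the paper does. The obstacle you flag in your last paragraph is genuine and is not resolved by the paper either: with the normalization by $1-|z|$ the stated limits cannot hold for unrestricted $z \to 1$ (for $z_\varepsilon = (1-\varepsilon^2)e^{i\varepsilon}$ one has $(1-|z_\varepsilon|)/|1-z_\varepsilon| \to 0$, so in case (iii) the normalized convolution tends to $0$ rather than to the stated positive constant), so the conclusion must be read as a limit along approaches with $\arg(1-z)\to 0$ (in particular radial), which is exactly what your Abelian argument establishes.
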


\subsection{Arbitrary number of convolutions}

In this section we study the convolution of an arbitrary number of 2-gons, including the limiting case $f_0$. For $\alpha \in \D \setminus \{ 0 \}$, we define the function $f_\alpha$ as the rotation of $f_{|\alpha|}$ by $e^{i\arg \alpha}$, {\it i.e.},
\[
f_\alpha (z) := e^{-i \arg \alpha} f_{|\alpha|} \big( e^{i \arg \alpha} z \big), \quad z \in \D.
\]
We are interested in the set of all possible limits of finite convolutions of 2-gons given by
\[
\Lim := \big \{ f \: : \: f = \lim_{n \rightarrow \infty} f_{\alpha_1} * \ldots * f_{\alpha_n}, \{ \alpha_n \}_{n \geq 1} \subset \D \setminus \{ 0 \} \big \} \subset \Conv.
\]

According to the lemma below, we may assume that all of the elements of the sequence $\{ \alpha_n \}_{n \geq 1}$, except perhaps $\al_1$, are positive.

\begin{lemma}
\[
\Lim = \big \{ f \: : \: f = \lim_{n \rightarrow \infty} f_{\alpha_1} * \ldots * f_{\alpha_n}, \alpha_1 \in \D \setminus \{ 0 \}, \{ \alpha_n \}_{n \geq 2} \subset (0,1) \big \}.
\]
\end{lemma}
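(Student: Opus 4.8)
The inclusion ``$\supseteq$'' is immediate, as the right-hand set simply restricts the admissible sequences. For the reverse inclusion the plan is to use that each $f_\alpha$ is a rotation of $f_{|\alpha|}$ and that rotation commutes with convolution, so that the whole accumulated rotation can be absorbed into a single factor. Writing $\alpha_j = r_j e^{i\theta_j}$ with $r_j = |\alpha_j| \in (0,1)$ and setting $R_\theta h(z) := e^{-i\theta}h(e^{i\theta}z)$, the definition of $f_\alpha$ reads $f_{\alpha_j} = R_{\theta_j}f_{r_j}$. Since the $k$-th Taylor coefficient of $R_\theta h$ is $e^{i(k-1)\theta}$ times that of $h$, a comparison of coefficients gives $R_\theta(g*h) = (R_\theta g)*h$, whence
\[
f_{\alpha_1}*\cdots*f_{\alpha_n} = R_{\Theta_n}\big(f_{r_1}*\cdots*f_{r_n}\big), \qquad \Theta_n := \theta_1+\cdots+\theta_n,
\]
so that the $k$-th coefficient of this partial convolution equals $e^{i(k-1)\Theta_n}p_k^{(n)}$, where $p_k^{(n)} := \prod_{j=1}^n g_k(r_j)$.

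First I would study the moduli $p_k^{(n)}$. By part (i) of Lemma~\ref{LemmaCima} we have $0 < g_k(r) < 1$ for $r \in (0,1)$ and $k \geq 2$, so each $\{p_k^{(n)}\}_n$ decreases to some $p_k^\infty \in [0,1)$. The crucial observation is a dichotomy: either $p_k^\infty = 0$ for all $k \geq 2$, or $p_k^\infty > 0$ for all $k \geq 2$. Indeed, $p_k^\infty > 0$ is equivalent to $\sum_j(1 - g_k(r_j)) < \infty$; and since each $g_k$ is a polynomial with $g_k(1) = 1$ and $g_k(r) < 1$ on $[0,1)$, one has $1 - g_k(r) = (1-r)h_k(r)$ with $h_k$ continuous and strictly positive on $[0,1]$, hence $1 - g_k(r) \asymp 1 - r$. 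Thus the convergence of $\sum_j(1 - g_k(r_j))$ is independent of $k$ and amounts to $\sum_j(1 - r_j) < \infty$ (recall $g_2(r) = r$).

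Now I would split into the two cases. If $\sum_j(1 - r_j) = \infty$, then every higher coefficient tends to $0$ and the limit is forced to be $f(z) = z$, which lies in the restricted set by taking $\beta_j = r_j \in (0,1)$ for all $j$. If instead $\sum_j(1 - r_j) < \infty$, then $p_2^\infty > 0$; since $f \in \Lim$, the coefficient $e^{i\Theta_n}p_2^{(n)}$ converges, and dividing by $p_2^{(n)} \to p_2^\infty > 0$ forces $e^{i\Theta_n} \to e^{i\Theta_\infty}$ for some $\Theta_\infty$. Consequently $e^{i(k-1)\Theta_n} = (e^{i\Theta_n})^{k-1} \to e^{i(k-1)\Theta_\infty}$ and the $k$-th coefficient of $f$ equals $p_k^\infty e^{i(k-1)\Theta_\infty}$. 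I then set $\beta_1 := r_1 e^{i\Theta_\infty} \in \D \setminus \{0\}$ and $\beta_j := r_j \in (0,1)$ for $j \geq 2$; the partial convolutions $f_{\beta_1}*\cdots*f_{\beta_n}$ have $k$-th coefficient $e^{i(k-1)\Theta_\infty}p_k^{(n)} \to p_k^\infty e^{i(k-1)\Theta_\infty}$, matching $f$ coefficientwise. As these partial convolutions are normalized convex mappings by Theorem~\ref{thm:RushShei}, they form a normal family, so coefficientwise convergence upgrades to locally uniform convergence and $f = \lim_n f_{\beta_1}*\cdots*f_{\beta_n}$ belongs to the restricted set.

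I expect the main obstacle to be the convergent case: the real phases $\Theta_n$ need not converge, and one must extract convergence of $e^{i\Theta_n}$ from the mere existence of the limit $f$. This is precisely where the dichotomy is essential, since it guarantees $p_2^\infty > 0$ and thereby converts the convergence of a single coefficient of $f$ into the convergence of $e^{i\Theta_n}$; once this is known, all higher phases are its powers and the absorption into the first factor $\beta_1$ is automatic.
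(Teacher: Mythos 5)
Your proof is correct and follows essentially the same route as the paper's: both split according to whether the products of moduli vanish, and in the non-degenerate case both extract the limiting phase from the second Taylor coefficient (where $g_2(\alpha)=\alpha$), absorb it into a modified first factor $\beta_1$, and conclude by coefficient comparison plus normality. The only real difference is the degenerate case: the paper reduces (without detailed justification) to $|\alpha_n|\ge \tfrac12$ and invokes the monotonicity $g_k(\alpha)\le\alpha$ from Lemma~\ref{LemmaCima}, whereas you derive the all-or-nothing dichotomy from the two-sided comparison $1-g_k(r)\asymp 1-r$, which makes the case split ($\sum_j (1-r_j)$ finite or infinite) slightly more transparent but is not a different method.
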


\begin{proof}

Let $\{ \alpha_n \}_{n \geq 1} \subset \D \setminus \{ 0 \}$ and consider the function $f \in \Hol$ such that
\[
f_{\alpha_1} * \ldots * f_{\alpha_n} \rightarrow f
\]
uniformly for all compact subset $K$ of $\D$. Since $g_2(\alpha) = \alpha$, we have that the sequence of partial products $\left \{ \prod_{j = 1}^n \alpha_j \right \}_{n \geq 1}$ is convergent. Let $p$ be its limit. At this point, we split the proof by cases.
\sm
\noi
(i) If $p = 0$, we are going to show that $f(z) = z$. Without loss of generality, we might assume that $|\alpha_n| \geq \frac{1}{2}$ for all $n \geq 1$. Because of Lemma \ref{LemmaCima}, we have that
\[
\prod_{j = 1}^n |\alpha_j| \geq \prod_{j = 1}^n \left | g_k (|\alpha_j|) \right |, \quad n \geq 1, k \geq 2,
\]
and thus it is clear that $f(z) = z$, which is obviously the limit of ``convolutioning'' $f_\frac{1}{2}$, with itself infinitely many times.

\sm
\noi
(ii) On the other hand, if $p \neq 0$ we have that the infinite product converges. By continuity, we have that $\prod_{n \geq 1} |\alpha_n| = |p|$, and therefore the product $\lambda := \prod_{n \geq 1} e^{i \arg \alpha_n}$ is also convergent. Observe that $|\lambda| = 1$, and consider the sequence $\beta_1 := \lambda |\alpha_1|$ and $\beta_n := |\alpha_n|$, $n \geq 2$. We are going to show that
\begin{equation} \label{LemmaAux}
f = \lim_{n \rightarrow \infty} f_{\beta_1} * \ldots * f_{\beta_n}.
\end{equation}
Indeed, observe that the $k$-th Taylor coefficient of $f$ is given by
\[
\lim_{n \rightarrow \infty} \prod_{j = 1}^n g_{k} (|\alpha_j|) e^{i (k-1) \arg \alpha_j} = \lambda^{k-1} \lim_{n \rightarrow \infty} \prod_{j = 1}^n g_k(|\alpha_j|),
\]
which is trivially equal to the $k$-th coefficient of $\lim_{n \rightarrow \infty} f_{\beta_1} * \ldots * f_{\beta_n}$ because of the definition of the sequence $\{ \beta_n \}_{n \geq 1}$. Therefore, it is clear that the identity \eqref{LemmaAux} holds.
\end{proof}

It may seem from this proof that $\{ \alpha_n \}_{n \geq 1}$ and $\{ \beta_n \}_{n \geq 1}$ can give rise to the same limit as long as
\[
\lim_{n \rightarrow \infty} \prod_{j = 1}^n \alpha_j = \lim_{n \rightarrow \infty} \prod_{j = 1}^n \beta_j,
\]
but this is not true. For instance, if $\{ \alpha_n \}_{n \geq 1} \subset (0,1)$ is a sequence such that its infinite product is convergent, and consider the sequence $\{ \beta_n \}_{n \geq 1}$ defined as
\[
\beta_{2m - 1} = \beta_{2m} = \sqrt{\alpha_m}, \quad \forall m \geq 1.
\]
A straightforward computation shows that the function $g_3 (\alpha) = 1 - \frac{2}{3} (1 - \alpha^2)$ is a \emph{supermultiplicative} function. In other words,
\[
g_3 (\alpha \beta) > g_3 (\alpha) g_3 (\beta)
\]
for all $\alpha, \beta \in (0,1)$. Then, we have that
\[
\prod_{j = 1}^n g_3(\alpha_j) > \prod_{j = 1}^{2n} g_3 (\beta_j), \quad \forall n \geq 1,
\]
and hence the convergence of the product $\prod_{j \geq 1} \beta_j$ yields that
\[
\prod_{j = 1}^\infty g_3 (\alpha_j) > \prod_{j = 1}^\infty g_3 (\beta_j) > 0,
\]
which suffices to see that
\[
 \lim_{n \rightarrow \infty} f_{\alpha_1} * \ldots * f_{\alpha_n} \neq \lim_{n \rightarrow \infty} f_{\beta_1} * \ldots * f_{\beta_n}.
\]

The following result highlights the effects of having a vanishing Taylor coefficient in $\Lim$.

\begin{prop} \label{ZeroCoeffLim}
If $f \in \Lim$ and $f^{(k_0)} (0) = 0$ for some $k_0 \geq 2$, then $f(z) \equiv z$.
\end{prop}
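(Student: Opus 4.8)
The plan is to translate the hypothesis into a statement about infinite products of the coefficient polynomials. By the previous lemma I may assume $\alpha_1 \in \D \setminus \{0\}$ and $\alpha_j \in (0,1)$ for $j \geq 2$; setting $\beta_j := |\alpha_j| \in (0,1)$, the modulus of the $k$-th Taylor coefficient of $f = \lim_{n \to \infty} f_{\alpha_1} * \cdots * f_{\alpha_n}$ equals $\prod_{j \geq 1} g_k(\beta_j)$, the unimodular rotation factors $e^{i(k-1)\arg \alpha_j}$ being irrelevant to the modulus. Since $g_1 \equiv 1$ and $f(0) = 0$, proving $f(z) \equiv z$ is the same as showing that the $k$-th coefficient vanishes for every $k \geq 2$, i.e. $\prod_{j} g_k(\beta_j) = 0$ for all $k \geq 2$. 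The hypothesis $f^{(k_0)}(0) = 0$ gives this for $k = k_0$; because $0 < g_{k_0}(\beta_j) < 1$ by Lemma~\ref{LemmaCima}(i), the vanishing of the product is equivalent to the divergence $\sum_j (1 - g_{k_0}(\beta_j)) = \infty$.

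The crux is then a uniform comparison of $1 - g_k$ with $1 - g_{k_0}$ on $(0,1)$. As $g_k$ is a polynomial with $g_k(1) = 1$, I factor $1 - g_k(\alpha) = (1 - \alpha)\, q_k(\alpha)$ with $q_k$ a polynomial, and claim $q_k > 0$ on the whole of $[0,1]$. On $[0,1)$ this is clear because $1 - g_k(\alpha) > 0$ there (Lemma~\ref{LemmaCima}(i) for $\alpha \in (0,1)$, together with $g_k(0) < 1$, the latter being the $k$-th Taylor coefficient of $f_0$, namely $1/k$ for odd $k$ and $0$ for even $k$). The only delicate value is $\alpha = 1$, where $q_k(1) = g_k'(1)$, so I must prove $g_k'(1) > 0$.

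To this end I differentiate the recursion \eqref{eq:Recurrence} and evaluate at $\alpha = 1$. Writing $d_n := g_n'(1)$ and using $g_n(1) = 1$, one obtains
\[
d_{n+2} = \frac{2 + 2 d_{n+1} + n d_n}{n+2}, \qquad d_1 = 0, \quad d_2 = 1.
\]
A two-step induction then gives $d_n \geq 0$ for all $n$ and $d_n > 0$ for $n \geq 2$, since $d_{n+2} \geq \frac{2}{n+2} > 0$ whenever $d_n, d_{n+1} \geq 0$. Hence $q_k(1) = d_k > 0$, so $q_k$ is continuous and strictly positive on the compact interval $[0,1]$; consequently $\min_{[0,1]} q_k > 0$ and $\max_{[0,1]} q_k < \infty$, and therefore
\[
c_k := \inf_{\alpha \in (0,1)} \frac{1 - g_k(\alpha)}{1 - g_{k_0}(\alpha)} = \inf_{\alpha \in (0,1)} \frac{q_k(\alpha)}{q_{k_0}(\alpha)} > 0.
\]

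With the comparison constant $c_k > 0$ in hand the conclusion is immediate: for every $k \geq 2$,
\[
\sum_j \big(1 - g_k(\beta_j)\big) \geq c_k \sum_j \big(1 - g_{k_0}(\beta_j)\big) = \infty,
\]
so $\prod_j g_k(\beta_j) = 0$, the $k$-th coefficient of $f$ vanishes, and $f(z) = z$. I expect the main obstacle to be exactly the positivity $g_k'(1) > 0$ — equivalently, that $1 - g_k$ has only a simple zero at $\alpha = 1$ — which ensures that $1 - g_k$ and $1 - g_{k_0}$ vanish to the same order at the boundary point $\alpha = 1$; the remaining steps are soft.
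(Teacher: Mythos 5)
Your proof is correct, but it takes a genuinely different route from the paper's. The paper argues by cases on the sequence $\{\alpha_n\}$: if infinitely many $\alpha_n \leq \frac{1}{2}$, monotonicity of $g_k$ in $\alpha$ kills every coefficient at once; otherwise, assuming (WLOG) that $k_0$ is the minimal vanishing index, the chain inequality of Lemma~\ref{LemmaCima}(ii) gives $g_{k_0}(\alpha) > \frac{k_0-1}{k_0+1-2\alpha}\, g_{k_0-1}(\alpha)$ for $\alpha > \frac{1}{2}$, and a contradiction follows because the vanishing product $\prod_j g_{k_0}(\alpha_j)$ would be bounded below by the product of two convergent positive infinite products. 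You replace both the case analysis and the minimality reduction by a single uniform comparison $1 - g_k(\alpha) \geq c_k \bigl(1 - g_{k_0}(\alpha)\bigr)$ on $(0,1)$, which rests on the observation --- not in the paper --- that $1 - g_k$ has only a \emph{simple} zero at $\alpha = 1$, i.e.\ $g_k'(1) > 0$; your derivation of this via the differentiated recursion \eqref{eq:Recurrence} is sound (the recursion is a polynomial identity, $g_n(1)=1$ for $n \geq 1$, and one can sanity-check $d_3 = \frac{4}{3}$ against $g_3(\alpha) = \frac{1+2\alpha^2}{3}$), as are the remaining ingredients: coefficients of the locally uniform limit are limits of the coefficient products, the equivalence between vanishing of a product of factors in $(0,1)$ and divergence of $\sum_j (1 - g_{k_0}(\beta_j))$ is standard, positivity of $q_k$ on $[0,1)$ follows from Lemma~\ref{LemmaCima}(i) together with $g_k(0) < 1$, and compactness of $[0,1]$ yields $c_k > 0$. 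What your approach buys is a cleaner structural fact: all the polynomials $1 - g_k$, $k \geq 2$, are uniformly comparable to $1 - \alpha$, so the vanishing of any single higher coefficient of $f$ is equivalent to $\sum_j (1 - |\alpha_j|) = \infty$, which simultaneously annihilates every coefficient; the paper's argument, by contrast, reuses its existing Lemma~\ref{LemmaCima}(ii) and avoids any new computation, but at the price of the case split and the reduction to minimal $k_0$.
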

\begin{proof}

Let $\alpha_1 \in \D \setminus \{ 0 \}$ and $\{ \alpha_n \}_{n \geq 2} \subset (0,1)$ be such that $f = \lim_{n \rightarrow \infty} f_{\alpha_1} * \ldots * f_{\alpha_n}$. Then, the condition $f^{(k_0)} (0) = 0$ implies that
\[
\lim_{n \rightarrow \infty} \prod_{j = 2}^n  g_{k_0} (\alpha_j) = 0,
\]
since $g_k(|\alpha_1|) > 0$. Without loss of generality, assume that $f^{(k_0 - 1)} (0) \neq 0$.

Two cases are possible:

\sm
\noi (i) If there exists a subsequence $\{ \alpha_{n_j} \}_{j \geq 1}$ such that $\alpha_{n_j} \leq \frac{1}{2}$ for all $j$, then the monotonocity of the polynomials $\{ g_k (\alpha) \}_{k \geq 2}$ with respect to $\alpha$ implies that
\[
\lim_{n \rightarrow \infty} \prod_{j = 1}^n g_k (\alpha_j) = 0, \quad \forall k \geq 2,
\]
and hence $f$ must be the identity.

\sm
\noi
(ii) If there exists $N \geq 2$ such that $\alpha_n > \frac{1}{2}$ for all $n \geq N$, we will show that
\[
\lim_{n \rightarrow \infty} \prod_{j = N}^n \alpha_j = 0,
\]
and then Lemma \ref{LemmaCima} yields that $f(z) \equiv z$.

In order to get a contradiction, assume that $\prod_{n \geq 1} \alpha_n$ is convergent. In other words, that $k_0 > 2$. Then, the product $\prod_{n \geq 1} \frac{k_0 - 1}{k_0 + 1 - 2\alpha_n}$ is also convergent. However, Lemma \ref{LemmaCima} implies that
\[
\prod_{j = N}^n g_{k_0} (\alpha_j) > \left( \prod_{j = N}^n \frac{k_0 - 1}{k_0 + 1 - 2\alpha_n} \right) \left( \prod_{j = N}^n g_{k_0 - 1} (\alpha_j) \right),
\]
so the product on the left-hand side is bounded from below by the multiplication of two convergent and positive products, which is not possible. Thus, the partial products of the sequence
$\{ \alpha_n \}_{n \geq 1}$ converge to zero, which means that $f$ is the identity.
\end{proof}

The following analogue of Theorem \ref{thm:FiniteConv} can also be drawn.

\begin{theo} \label{thm:ConvGrowth}
Let $\{ \alpha_n \}_{n \geq 1} \subset (0,1]$ and $B := \sum_{n \geq 1} (1 - \alpha_n) \in (0, \infty]$. Consider the function $f := \lim_{n \rightarrow \infty} f_{\alpha_1} * \ldots * f_{\alpha_n}$. Then, we have the following:

\sm
\noi
(i) If $B > 1$, then $f$ is bounded.

\sm
\noi
(ii) If $B = 1$, then
\begin{equation} \label{eq:Asympt1}
\lim_{z \rightarrow 1} \dfrac{f(z)}{-\log(1-|z|)} = \dfrac{1}{2} \prod_{n = 1}^\infty \dfrac{1}{\Gamma(\alpha_n + 1)}.
\end{equation}

\sm
\noi
(iii) If $B < 1$, then
\[
\lim_{z \rightarrow 1} (1 - |z|)^{1 - B} f(z) = \dfrac{\Gamma \left( 1 - B \right)}{2^B} \prod_{n = 1}^\infty \dfrac{1}{\Gamma (\alpha_n + 1)}.
\]
\end{theo}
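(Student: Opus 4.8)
The plan is to reduce everything to the asymptotics of the Taylor coefficients of $f$ and then run an Abelian argument in the spirit of the proof of Theorem \ref{thm:LimitCoeff}. Since convolution multiplies coefficients and $f$ is the locally uniform limit of the partial convolutions, the $k$-th coefficient of $f$ is $a_k = \prod_{j \geq 1} g_k(\alpha_j)$. Writing $G_n(\alpha) = n^{1-\alpha} g_n(\alpha)$ as in Theorem \ref{TheoConvergence} and using $B = \sum_j (1-\alpha_j)$, one has $k^{B} a_k = \prod_{j \geq 1} G_k(\alpha_j)$, so (when $B < \infty$) the whole theorem rests on the single claim
\[
\lim_{k \to \infty} \prod_{j \geq 1} G_k(\alpha_j) = \prod_{j \geq 1} L(\alpha_j) = 2^{-B} \prod_{j \geq 1} \frac{1}{\Gamma(\alpha_j + 1)} =: C,
\]
the middle equality being Theorem \ref{thm:LimitCoeff} together with $\prod_j 2^{\alpha_j - 1} = 2^{-B}$. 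Note that $\alpha_j \to 1$ forces $|1 - L(\alpha_j)| = O(1-\alpha_j)$, so both infinite products converge to positive numbers whenever $B < \infty$.

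The heart of the matter, and the step I expect to be the main obstacle, is justifying the interchange of the limit in $k$ with the infinite product, i.e. passing from the pointwise convergence $G_k(\alpha) \to L(\alpha)$ to convergence of the products. I would split $\prod_{j} G_k(\alpha_j)$ into the finitely many factors with $\alpha_j \le \tfrac12$ (controlled by the pointwise convergence of Theorem \ref{TheoConvergence}) and the tail with $\alpha_j > \tfrac12$. For the tail I need a bound $|\log G_k(\alpha)| \le C(1-\alpha)$ that is \emph{uniform} in $k$, so that $\sum_{\alpha_j > 1/2} |\log G_k(\alpha_j)| \le C \sum_j (1-\alpha_j) \le CB$ and the tails are uniformly small. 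Such a bound comes from refining the estimates already present in the proof of Theorem \ref{TheoConvergence}: inserting \eqref{eq:Aux2} together with $g_n(\alpha) < 1$ into the control of $G_{n+2}(\alpha) - G_n(\alpha)$ gives $|G_{n+2}(\alpha) - G_n(\alpha)| \le C(1-\alpha)\, n^{-1-\alpha}$ for $\alpha \in [\tfrac12,1)$. Summing this over the tail of a fixed parity telescopes to $|G_k(\alpha) - L(\alpha)| \le C(1-\alpha)\, k^{-\alpha}$, and combining with $|L(\alpha) - 1| = O(1-\alpha)$ yields $|G_k(\alpha) - 1| \le C(1-\alpha)$ uniformly in $k \ge 1$, which is exactly what the tail estimate requires.

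With the claim in hand, $a_k = C k^{-B}(1 + o(1))$ and the three cases follow by comparing $f(r) = \sum_k a_k r^k$ against an explicit series. When $B < 1$ I would compare $a_k$ with the coefficients $c_k(1-B) = \Gamma(k+1-B)/(\Gamma(1-B)\,k!)$ of $(1-z)^{-(1-B)}$, which satisfy $c_k(1-B) \sim k^{-B}/\Gamma(1-B)$, so $a_k/c_k(1-B) \to C\Gamma(1-B)$; positivity of the coefficients then lets one squeeze exactly as in the proof of Theorem \ref{thm:LimitCoeff} to obtain $(1-r)^{1-B} f(r) \to C\Gamma(1-B) = \Gamma(1-B)\,2^{-B}\prod_j \Gamma(\alpha_j+1)^{-1}$. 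When $B = 1$ the same squeeze against $\sum_k r^k/k = -\log(1-r)$ gives $f(r)/(-\log(1-r)) \to C = \tfrac12 \prod_j \Gamma(\alpha_j+1)^{-1}$. When $B > 1$ (including $B = \infty$) one instead bounds $a_k \le \prod_{j=1}^N g_k(\alpha_j) = O(k^{-\sum_{j\le N}(1-\alpha_j)})$ for $N$ large enough that the exponent exceeds $1$, whence $\sum_k a_k < \infty$ and $f$ is bounded.

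Finally, the radial statements must be promoted to limits as $z \to 1$ in $(1-|z|)$. Since each partial convolution lies in $\Conv$ by Theorem \ref{thm:RushShei} and $\Conv$ is closed under locally uniform limits, $f$ is itself a (possibly unbounded) convex univalent map, and I would invoke the Carathéodory boundary correspondence exactly as in Theorem \ref{thm:FiniteConv} to replace the radial limits by the stated limits as $z \to 1$. The only genuinely new difficulty beyond the two-factor case of Theorem \ref{thm:FiniteConv} is the uniform control of the infinite product described above; once that is secured, the Abelian and boundary arguments are identical to the ones already used.
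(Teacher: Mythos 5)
Your proposal is correct and takes essentially the same route as the paper: both reduce the theorem to the coefficient identity $k^{B}a_k=\prod_j G_k(\alpha_j)$, pass to the limit $\prod_j L(\alpha_j)=2^{-B}\prod_j\Gamma(\alpha_j+1)^{-1}$ via Theorem~\ref{TheoConvergence} and Theorem~\ref{thm:LimitCoeff}, handle $B>1$ by truncating to a finite partial convolution, and finish with the Abelian/Carath\'eodory argument of Theorem~\ref{thm:LimitCoeff} and Theorem~\ref{thm:FiniteConv}. If anything, your explicit uniform tail bound $|G_k(\alpha)-1|\leq C(1-\alpha)$ supplies the justification for interchanging the limit in $k$ with the infinite product, a step the paper dispatches with only the phrase ``as a consequence of the uniform convergence of $\{G_k(\alpha)\}_{k\geq 1}$.''
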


\begin{proof}

\sm
\noi
(i) If $B > 1$, then there exists $N$ big enough such that the partial sum $\sum_{n = 1}^N (1 - \alpha_n)$ is greater than $1$ as well. Thus, we have that
\[
 \| f \|_{\infty} \leq \| f_{\alpha_1} * \ldots * f_{\alpha_N} \|_{\infty} < \infty.
\]

\sm
\noi
(ii) Since the sum is convergent observe $\alpha_n < \frac{1}{2}$ for, at most, a finite number of indexes. Because $|1 - L(\alpha)| = O (1 - \alpha)$ in $\left( \frac{1}{2}, 0 \right)$, note that, in particular, we have that the product $\prod_{n \geq 1} L (\alpha_n)$ is convergent. As a consequence of the uniform convergence of $\{ G_k (\alpha) \}_{k \geq 1}$, we have that there exists $k_0 \geq 0$ such that
\[
\prod_{n = 1}^\infty L (\alpha_n) - \varepsilon \leq k \prod_{n = 1}^\infty g_k (\alpha_n) \leq \prod_{n = 1}^\infty L (\alpha_n) + \varepsilon, \quad \forall k \geq k_0,
\]
and therefore \eqref{eq:Asympt1} holds.

\sm
\noi
(iii) It is similar to the previous case.
\end{proof}

Although finite convolutions of elements of $\{ f_\alpha \}_{0 \leq \alpha \leq 1}$ do not belong to $\Lim$, the latter theorem can be also applied for this case. Nevertheless, these finite convolutions may be approximated by functions in $\Lim$, as it is implicitly shown in the following proposition.

\begin{prop}
The functions $f_0$ and $f_1$ do not belong to $\Lim$, but they belong to the closure of $\Lim$. In particular, $\Lim$ is not closed.
\end{prop}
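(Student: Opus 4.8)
The plan is to handle the two assertions separately: first that neither $f_0$ nor $f_1$ lies in $\Lim$, and then that both lie in its closure. For the non-membership, recall that by the previous lemma every $f \in \Lim$ can be written as $f = \lim_n f_{\alpha_1} * \cdots * f_{\alpha_n}$ with $\alpha_1 \in \D \setminus \{0\}$ and $\{\alpha_n\}_{n \geq 2} \subset (0,1)$, so that its $k$-th Taylor coefficient is the convergent product $\prod_{n \geq 1} g_k(|\alpha_n|)\, e^{i(k-1)\arg\alpha_n}$. Since $g_2(\alpha) = \alpha$ and $|\alpha_n| < 1$ for all $n$, the second coefficient of any $f \in \Lim$ has modulus $\prod_{n \geq 1} |\alpha_n| < 1$, whereas the second coefficient of $f_1(z) = z/(1-z)$ equals $1$; hence $f_1 \notin \Lim$. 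For $f_0(z) = \frac{1}{2} \log\frac{1+z}{1-z} = \sum_{k\ \mathrm{odd}} z^k/k$, the coefficient of $z^2$ vanishes while $f_0 \not\equiv z$; were $f_0 \in \Lim$, Proposition \ref{ZeroCoeffLim} would force $f_0 \equiv z$, a contradiction, so $f_0 \notin \Lim$.

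The membership in $\overline{\Lim}$ rests on one compactness observation. Every finite convolution $f_{\alpha_1} * \cdots * f_{\alpha_n}$, and hence every element of $\Lim$, is a normalized convex univalent map, so by the growth theorem for convex functions it satisfies $|f(z)| \leq |z|/(1 - |z|)$; the whole family is therefore locally bounded and normal. Consequently any sequence drawn from this family converges locally uniformly as soon as each of its Taylor coefficients converges, the limit being the convex function carrying those coefficients.

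With this in hand I construct the approximations. For $f_1$, fix $\delta > 0$, set $\alpha_n := 1 - \delta\, 2^{-n}$, and note $\sum_n (1 - \alpha_n) = \delta < \infty$, so the infinite convolution $F_\delta := \lim_n f_{\alpha_1} * \cdots * f_{\alpha_n}$ lies in $\Lim$. As $\delta \to 0^+$ each $\alpha_n \to 1$, hence $g_k(\alpha_n) \to 1$ and the $k$-th coefficient $\prod_n g_k(\alpha_n)$ of $F_\delta$ tends to $1$, the $k$-th coefficient of $f_1$; by the compactness observation $F_\delta \to f_1$ locally uniformly, so $f_1 \in \overline{\Lim}$. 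For $f_0$ I argue in two steps. First, for fixed $\alpha \in (0,1)$ the same construction with $\alpha_1 := \alpha$ and $\alpha_n := 1 - \delta\, 2^{-n}$ for $n \geq 2$ produces functions in $\Lim$ whose $k$-th coefficient converges to $g_k(\alpha)$ as $\delta \to 0^+$, showing $f_\alpha \in \overline{\Lim}$ for every $\alpha \in (0,1)$. Since $f_\alpha \to f_0$ locally uniformly as $\alpha \to 0^+$ and $\overline{\Lim}$ is closed, we conclude $f_0 \in \overline{\Lim}$. Finally, $f_0, f_1 \in \overline{\Lim} \setminus \Lim$ shows that $\Lim$ is not closed.

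The main obstacle is precisely the upgrade from coefficientwise to locally uniform convergence, and this is where convexity is essential: without the normality furnished by the bound $|f(z)| \leq |z|/(1-|z|)$ the easy coefficient limits would not control the functions in the topology defining $\overline{\Lim}$. A secondary point to check carefully is that each constructed infinite convolution genuinely belongs to $\Lim$, i.e. that $\sum_n (1 - \alpha_n) < \infty$ indeed guarantees convergence of every coefficient product and hence local uniform convergence of the partial convolutions.
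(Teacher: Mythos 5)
Your proposal is correct in substance and, in outline, matches the paper's proof: both non-membership arguments (the coefficient-modulus argument for $f_1$, and Proposition \ref{ZeroCoeffLim} applied to the vanishing even coefficients of $f_0$) are exactly the paper's, and your approximation scheme --- infinite convolutions whose parameters approach $1$ geometrically fast, with normality of normalized convex maps upgrading coefficientwise to locally uniform convergence --- is the same device the paper implements with the choice $\alpha_{n,j} = \left(1 - \frac{1}{j+1}\right)^{2^{-n}}$.

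The one place you genuinely diverge is in identifying the limit of your $F_\delta$ as $f_1$, and there your argument has a soft spot. You pass from ``each $g_k(\alpha_n) \to 1$ as $\delta \to 0^+$'' to ``$\prod_n g_k(\alpha_n) \to 1$'', but pointwise convergence of the factors of an infinite product does not control the product: factors equal to $\frac{1}{2}$ for $n \leq 1/\delta$ and equal to $1$ afterwards all tend to $1$, while the product tends to $0$. What saves you is a quantitative bound: each $g_k$ is a polynomial with $g_k(1) = 1$ (Lemma \ref{LemmaCima}), hence Lipschitz on $[0,1]$, so $1 - g_k(\alpha) \leq C_k (1 - \alpha)$; with your choice $1 - \alpha_n = \delta 2^{-n}$ this gives $\prod_n g_k(\alpha_n) \geq 1 - C_k \delta$, and the coefficients of $F_\delta$ do converge to $1$. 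That line should be added. The paper avoids this issue entirely with a rigidity trick: by its choice of exponents, the second coefficient of $F_j$ is exactly $\prod_n \alpha_{n,j} = \frac{j}{j+1} \to 1$, and since normalized convex maps satisfy $|a_n| \leq 1$ with equality only for rotations of $f_1$, the limit must be $f_1$ --- no control of the higher coefficients is needed. Note also that your flagged ``secondary point'' is aimed at the wrong step: membership of $F_\delta$ in $\Lim$ is automatic for positive parameters, whatever $\sum_n (1-\alpha_n)$ is, because each coefficient of the partial convolutions is a positive decreasing sequence, so it converges, and normality then gives locally uniform convergence; the Lipschitz estimate is needed only for the $\delta \to 0^+$ limit. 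Finally, for $f_0$ your two-step route ($f_\alpha \in \overline{\Lim}$ for every $\alpha \in (0,1)$, then $f_\alpha \to f_0$ together with closedness of $\overline{\Lim}$ in the metrizable compact-open topology) differs from, but is just as good as, the paper's one-step choice $G_j := f_{1/j} * F_j \in \Lim$ with $G_j \to f_0$; yours has the mild bonus of showing that every $f_\alpha$, $0 < \alpha < 1$, lies in $\overline{\Lim}$.
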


\begin{proof}
Since the coefficients of $f_\alpha$, $\alpha \in \D$, have moduli strictly less than $1$, it is clear that $f_1 \not \in \Lim$. On the other hand, $f_0$ does not belong to $\Lim$ because of Proposition \ref{ZeroCoeffLim}.

Now, consider the quantities
\[
\alpha_{n,j} := \left ( 1 - \dfrac{1}{j + 1} \right)^{2^{-n}}, \quad n, j \geq 1,
\]
and the sequence of functions $F_{j} := \lim_{n \rightarrow \infty} f_{\alpha_{1,j}} * \ldots * f_{\alpha_{n,j}}$. It is clear that $\{ F_j \}_{j \geq 1} \subset \Lim$.

Observe that $g_k (\alpha_{n,j}) \leq g_k (\alpha_{n, j+1}) < 1$ for all $n,j \geq 1$ and $k \geq 2$, so the sequence of $k$-th Taylor coefficients of the functions $\{ F_j \}_{j \geq 1}$ is convergent. Hence, the limit $F = \lim_{j \rightarrow \infty} F_j$ exists. It remains to show that $F \equiv f_1$, but this a consequence of the extremal properties of $f_1$. Indeed, it is a well-known fact that
\[
|a_n| \leq 1, \quad \forall n \geq 2
\]
if $f(z) = z + \sum_{n \geq 2} a_n z^n \in \Conv$, and that equality is attained if and only if $f(z) \equiv \overline{\lambda} f_1 (\lambda z)$ for some unimodular constant $\lambda$. It is clear that $F \in \Conv$ and, by the construction of $\{ F_j \}_{j \geq 1}$, its second Taylor coefficient is equal to 1. Therefore $F$ must coincide with $f_1$.

Once we know that $f_1 \in \overline{\Lim}$, it is easy to see that $f_0 \in \overline{\Lim}$ (for example, it can be checked that $G_j := f_{1/j} * F_j \in \Lim$ converges to $f_0$ when $j \rightarrow \infty$) and consequently we have finished the proof.
\end{proof}

Unfortunately, a complete characterization of $\overline{\Lim}$ seems to be out of reach. However, we can provide the following necessary condition.

\begin{prop}
Let $f \in \overline{\Lim}$. Then, one of the following statements is true:

\sm
\noi
(i) All of the Taylor coefficients of $f$ are different than $0$.

\sm
\noi
(ii) The function $f$ is an odd function and all the odd Taylor coefficients are different than 0.

\sm
\noi
(iii) $f(z) \equiv z$.
\end{prop}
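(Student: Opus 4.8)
The plan is to read off the Taylor coefficients of elements of $\Lim$ and to combine the rigidity of Proposition \ref{ZeroCoeffLim} with a handful of pointwise comparison inequalities between the polynomials $g_k$. Write $f=\lim_{m\to\infty}f_m$ with $f_m\in\Lim$. If a subsequence of the $f_m$ equals the identity $z$, then $f\equiv z$ and we are in case (iii); so I may assume $f_m\not\equiv z$ for every $m$. By the reduction lemma for $\Lim$, each $f_m$ comes from parameters $\alpha_1^{(m)}\in\D\setminus\{0\}$ and the rest in $(0,1)$, so setting $\lambda_m=e^{i\arg\alpha_1^{(m)}}$ and $\beta_j^{(m)}=|\alpha_j^{(m)}|\in(0,1)$, the $k$-th coefficient of $f_m$ is $\lambda_m^{k-1}P_k^{(m)}$ with $P_k^{(m)}=\prod_{j\ge1}g_k(\beta_j^{(m)})$. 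Since $f_m\not\equiv z$, Proposition \ref{ZeroCoeffLim} gives $P_k^{(m)}>0$ for all $k$, and $g_2(\alpha)=\alpha$ yields $P_2^{(m)}=\prod_j\beta_j^{(m)}$. Passing to moduli (no subsequence is needed, as $|\lambda_m|=1$), the modulus of the $k$-th coefficient of $f$ is $p_k:=\lim_{m\to\infty}P_k^{(m)}$, and the whole statement reduces to deciding, index by index, whether $p_k=0$.

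The engine of the argument is a family of elementary comparisons valid for all $\alpha\in(0,1)$: I will show that there exist positive constants for which
\[
g_k(\alpha)\ge \alpha^{c_k}\quad(k\ge1),\qquad g_{2\ell}(\alpha)\le\alpha^{d_{2\ell}}\quad(\ell\ge1),\qquad g_3(\alpha)\le g_{2\ell_0+1}(\alpha)^{s}\quad(\ell_0\ge1).
\]
Each follows by proving that the relevant ratio of logarithms, namely $\log g_k(\alpha)/\log\alpha$ in the first two and $\log g_3(\alpha)/\log g_{2\ell_0+1}(\alpha)$ in the third, is bounded away from $0$ and $\infty$ on $(0,1)$. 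Here I use that $g_k$ is a polynomial with $g_k(1)=1$ and with constant term $g_k(0)=0$ for even $k$ and $g_k(0)=1/k$ for odd $k$ (read off from $f_1$ and $f_0$), together with $g_k'(1)>0$ (the coefficients of $g_k$ are non-negative by Lemma \ref{LemmaCima}); this makes each ratio continuous on $(0,1)$ with finite, strictly positive endpoint limits, hence bounded. I expect this endpoint analysis — keeping the constants uniform in $\alpha$ and tracking signs when dividing through by the negative quantity $\log\alpha$ — to be the only genuinely delicate point.

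With these inequalities the trichotomy is immediate. If $p_2>0$, then $P_k^{(m)}\ge(P_2^{(m)})^{c_k}$ forces $p_k\ge p_2^{c_k}>0$ for every $k$, so all coefficients of $f$ are nonzero: case (i). If $p_2=0$, then $P_{2\ell}^{(m)}\le(P_2^{(m)})^{d_{2\ell}}$ forces $p_{2\ell}=0$ for all $\ell\ge1$, so every even coefficient of $f$ vanishes and $f$ is odd. Within this odd case, suppose some odd coefficient beyond the first vanishes, say $p_{2\ell_0+1}=0$ with $\ell_0\ge1$; then $P_3^{(m)}\le(P_{2\ell_0+1}^{(m)})^{s}$ gives $p_3=0$, and the monotonicity $g_{2\ell+1}(\alpha)\le g_3(\alpha)$ (the odd subsequence is decreasing, Lemma \ref{LemmaCima}) gives $P_{2\ell+1}^{(m)}\le P_3^{(m)}$, whence $p_{2\ell+1}=0$ for all $\ell\ge1$; as the first coefficient is always $1$, this means $f\equiv z$, case (iii). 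If no odd coefficient beyond the first vanishes, we are in case (ii). This exhausts all possibilities and completes the proof.
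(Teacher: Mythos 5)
Your proof is correct, and its skeleton is the same as the paper's: represent each approximating $F_j \in \Lim$ with $F_j \not\equiv z$ through parameters $\alpha_{1,j} \in \D \setminus \{0\}$ and $\{\alpha_{n,j}\}_{n \geq 2} \subset (0,1)$, identify the moduli of its Taylor coefficients with the infinite products $P_k^{(j)} = \prod_n g_k(|\alpha_{n,j}|)$, and settle the trichotomy according to whether $\lim_j P_2^{(j)}$ vanishes, using termwise inequalities between the polynomials $g_k$. The difference lies in the inequalities themselves. The paper uses only the elementary bound $g_k(\alpha) \geq \alpha^{k-1}$ (its \eqref{Even1}) and the monotonicity of the even and odd subsequences from Lemma \ref{LemmaCima}, namely $g_{2m}(\alpha) \leq g_2(\alpha) = \alpha$ (its \eqref{Even2}) and $g_{2m+1}(\alpha) \leq g_3(\alpha)$. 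You add a third, genuinely new comparison, $g_3(\alpha) \leq g_{2\ell_0+1}(\alpha)^{s}$ for some $s = s(\ell_0) > 0$, proved by compactness of the ratio $\log g_3(\alpha)/\log g_{2\ell_0+1}(\alpha)$, which is continuous and positive on $(0,1)$ with positive limits $\log 3/\log(2\ell_0+1)$ at $0$ (since $g_{2\ell+1}(0) = 1/(2\ell+1)$) and $g_3'(1)/g_{2\ell_0+1}'(1)$ at $1$. This is precisely what is needed to show that the vanishing of a single odd limit coefficient $p_{2\ell_0+1}$ with $\ell_0 \geq 2$ forces $p_3 = 0$ and hence, via $g_{2\ell+1} \leq g_3$, the vanishing of all odd coefficients, so that $f \equiv z$. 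The paper's own proof asserts that no odd coefficient can vanish when $f \not\equiv z$, but its three displayed inequalities justify this only when the vanishing index is $3$: for index $2\ell_0+1 \geq 5$ the inequality $g_{2\ell_0+1} \leq g_3$ points the wrong way, and $\lim_j P_2^{(j)} = 0$ alone does not force $\lim_j P_3^{(j)} = 0$ (let one parameter tend to $0$ while the remaining ones form a fixed sequence increasing rapidly to $1$; then $P_2^{(j)} \to 0$ while $P_3^{(j)}$ stays above a positive constant, consistent with limits such as $f_0$, which are odd with nonzero odd coefficients). So your extra inequality closes a step that the paper leaves unjustified, and this is the main value of your route. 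Two cosmetic points: your blanket claim that $\log g_k(\alpha)/\log\alpha$ is bounded away from $0$ fails for odd $k$ (that ratio tends to $0$ as $\alpha \to 0^+$ because $g_k(0) = 1/k > 0$), but this is harmless since the lower bound $g_k(\alpha) \geq \alpha^{c_k}$ only requires the ratio to be bounded above; and for your first two families of inequalities you could simply quote the paper's elementary facts (the choices $c_k = k-1$ and $d_{2\ell} = 1$ work), reserving the log-ratio compactness argument for the third inequality, where it is indispensable.
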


\begin{proof}
Take $f \in \overline{\Lim}$ different than the identity. Consider a sequence of functions $\{ F_j \}_{j \geq 1} \subset \Lim$ such that $F~=~\lim_{j \rightarrow \infty} F_j$. Since $f(z) \not \equiv z$, we might assume that $F_j(z) \not \equiv z$ for all $j$. Let $\{ \alpha_{n, j} \}_{n \geq 1}$ (all of them positive except, possibly, the first one) such that $F_j~=~\lim_{n \rightarrow \infty} f_{\alpha_{1,j}} * \ldots * f_{\alpha_{n,j}}$. Note that we have that
\begin{equation} \label{Even1}
g_k (|\alpha_{1, j}|) \prod_{n = 2}^\infty g_k (\alpha_{n,j}) \geq |\alpha_{1, j}|^{k-1} \prod_{n = 2}^\infty \alpha_{n,j}^{k - 1}, \quad \forall k \geq 2, \forall j \geq 1,
\end{equation}
since the elementary estimate $g_k(\alpha) \geq \alpha^{k - 1}$ holds for every $0 < \alpha < 1$ and $k~\geq~2$. Moreover, Lemma \ref{LemmaCima} implies that
\begin{equation} \label{Even2}
|\alpha_{1,j}| \prod_{n = 2}^\infty \alpha_{n,j} \geq g_{2m} (|\alpha_{1,j}|) \prod_{n = 2}^\infty g_{2m} (\alpha_{n,j})
\end{equation}
and
\begin{equation*}
g_3 (|\alpha_{1, j}|) \prod_{n = 2}^\infty g_3 (\alpha_{n,j}) \geq g_{2m + 1} (|\alpha_{1,j}|)\prod_{n = 2}^\infty g_{2m + 1} (\alpha_{n,j})
\end{equation*}
for all $j, m \geq 1$. Consequently, there is no odd number $k$ such that
\[
\lim_{j \rightarrow \infty} g_{k} (|\alpha_{1,j}|) \prod_{n = 2}^\infty g_{k} (\alpha_{n,j}) = 0
\] because $f(z) \not \equiv z$. If $k$ is an even number such that the $k$-th Taylor coefficient of $f$ is zero, it follows from \eqref{Even1} and \eqref{Even2} that $f$ must be an odd function.
\end{proof}

\section{Estimates for unbounded convex sets} \label{sec:GeneralFunct}

In this section we deduce estimates for general unbounded convex sets. We recall that the angle at infinity of a convex set is the infimum of the angles of all sectors containing that set. It follows from convexity that an unbounded convex set contains a $2$-gon of this amplitude if it is positive.

\begin{theo}
Let $0 < \alpha < 1$. Let $g$ be a unbounded convex mapping with $g(1) = \infty$. If $g(\D)$ has an angle at infinity of $\pi \alpha$, then there exists $\delta > 0$ such that
\begin{equation} \label{eqn:LowerBound}
|1 - z|^\alpha |g (z)| \geq \delta
\end{equation}
when $z \rightarrow 1$.
\end{theo}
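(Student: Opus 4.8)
The plan is to reduce the estimate to the explicit boundary asymptotics of $f_\alpha$ by comparing $g$ with a conformal map onto a $2$-gon contained in $g(\D)$. Write $G=g(\D)$. Since the angle at infinity equals $\pi\alpha>0$, the remark preceding the statement guarantees that $G$ contains a $2$-gon $P$ of amplitude $\pi\alpha$, whose recession directions coincide with those of $G$; after an affine change $w\mapsto aw+b$ (which affects neither the hypotheses nor the quantity $|1-z|^\alpha|g(z)|$ up to a positive constant, because $|g(z)|\to\infty$) we may arrange that $P$ contains $g(0)$ and that its point at infinity is the prime end reached as $z\to 1$. Let $\psi\colon\D\to P$ be conformal with $\psi(0)=g(0)$ and $\psi(1)=\infty$. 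Such a $\psi$ is an affine image of $f_\alpha$ precomposed with an automorphism $\tau$ of $\D$ fixing $1$, so the computation in the proof of Theorem~\ref{thm:LimitCoeff} (namely $\lim_{r\to1^-}(1-r)^\alpha f_\alpha(r)=2^{\alpha-1}/\alpha$, together with $1-\tau(z)\sim\tau'(1)(1-z)$) yields a constant $c>0$ with $|1-z|^\alpha|\psi(z)|\to c$ as $z\to1$ nontangentially.

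Next I would invoke subordination. Since $g$ is univalent with convex range, $\psi(\D)=P\subseteq G=g(\D)$ and $\psi(0)=g(0)$ give $\psi\prec g$; that is, $\psi=g\circ\phi$ for a Schwarz function $\phi\colon\D\to\D$ with $\phi(0)=0$. Because $\psi(z)\to\infty$ forces $g(\phi(z))\to\infty$, and $g$ attains $\infty$ only at the prime end $z=1$, the map $\phi$ has the boundary fixed point $\phi(1)=1$. As $\phi$ already fixes the interior point $0$, the origin is its Denjoy--Wolff point, and the Julia--Wolff--Carath\'eodory theorem gives that the angular derivative $d:=\phi'(1)$ exists with $d\ge 1$. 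Hence $\frac{1-\phi(z)}{1-z}\to d$ nontangentially (or $|1-\phi(z)|/|1-z|\to\infty$ when $d=\infty$), so in either case $\liminf_{z\to1}|1-\phi(z)|/|1-z|\ge 1>0$. Combining the two facts finishes the core estimate: writing $w=\phi(z)$, we get $|1-w|^\alpha|g(w)|=|1-\phi(z)|^\alpha|\psi(z)|\sim c\big(|1-\phi(z)|/|1-z|\big)^\alpha\ge \tfrac{c}{2}$ for $z$ close enough to $1$, which is exactly \eqref{eqn:LowerBound} along the curve $w=\phi(z)$ terminating at $1$.

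The main obstacle is not this computation but the two topological inputs feeding it. First, one must justify that, for an unbounded convex $G$ with angle at infinity $\pi\alpha\in(0,\pi)$, the point at infinity is a \emph{single} prime end, located at $z=1$, so that the matching $\phi(1)=1$ is legitimate; this is where the Carath\'eodory--Torhorst prime-end machinery cited before the statement enters (the degenerate case $\alpha=0$, a strip with two ends, shows the hypothesis $\alpha>0$ is essential here). Second, the argument as written controls $|1-z|^\alpha|g(z)|$ only on the approach region $\phi(\D)=g^{-1}(P)$, and upgrading it to every radial or nontangential approach $z\to1$ requires either showing that the radial image $g(r)$ eventually lies in $P$ (plausible, since $g(r)\to\infty$ within the recession cone, whose interior meets $P$) or letting the auxiliary $2$-gon $P$ rotate with the direction of approach while keeping the constant $c$ uniform. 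I expect these prime-end and covering issues to be the delicate part of a fully rigorous proof, the analytic transfer via subordination and Julia--Wolff--Carath\'eodory being comparatively routine.
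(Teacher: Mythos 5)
Your first step coincides with the paper's own first step: you subordinate a conformal map $\psi$ of a sector $P\subseteq g(\D)$ of amplitude $\pi\alpha$ to $g$, obtain a Schwarz function $\phi=g^{-1}\circ\psi$ with $\phi(0)=0$ and a boundary fixed point at $1$ whose angular derivative is at least $1$ (Julia--Wolff), and combine this with $(1-r)^\alpha f_\alpha(r)\to 2^{\alpha-1}/\alpha$ to get a positive limit of $|1-w|^\alpha|g(w)|$ along $w=\phi(z)$, $z\to1$ nontangentially. This is precisely the first half of the paper's proof (there $\varphi$ plays the role of your $\phi$), and the prime-end normalization $\phi(1)=1$ that worries you is handled there in the same way.

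The genuine gap is the one you flag and then leave open: your estimate holds only on the approach region $\phi(\D)=g^{-1}(P)$, whereas the theorem asserts \eqref{eqn:LowerBound} for unrestricted $z\to1$. Neither of your proposed repairs is adequate. The radial image $g(r)$ need not eventually enter $P$: a domain with angle $\pi\alpha$ at infinity can exceed any \emph{fixed} sector of that amplitude arbitrarily far out (the domain of Figure~\ref{fig:VertexInf} admits no minimal containing sector at all), so a curve may tend to infinity inside $g(\D)$ while staying outside $P$, for instance in a strip parallel to an edge of $P$; and rotating $P$ with the direction of approach begs exactly the question of uniformity of the constant, which is the whole difficulty. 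The paper closes this gap with an idea that is absent from your proposal and is not of prime-end type: positivity of the angle at infinity makes $g(\D)$ a quasidisk on the sphere, hence $\|Sg\|<2$ by Theorem~4 of \cite{MR2865506}; therefore $G=\frac1g+1$ maps $\D$ onto a bounded quasidisk, in particular a John domain, and the Carleson-box estimate $\operatorname{diam} G\big(B(z)\big)\le C\, d\big(G(z),\partial G(\D)\big)\le C\,|G(z)-G(1)|=C/|g(z)|$ yields $|g(\zeta)|\ge |g(z)|/(1+C)$ whenever $\zeta$ lies in the box $B(z)$ of a point $z\in g^{-1}(P)$. Since points of $\D\setminus g^{-1}(P)$ near $1$ lie in such boxes with $|1-\zeta|\sim|1-z|$, the lower bound transfers from $g^{-1}(P)$ first to the boundary and then to all of $\D$ near $1$. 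Without this quasiconformal/John-domain input, or a genuine substitute that controls $|g|$ off the approach region, what you have establishes the inequality only along $\phi(\D)$, not the stated theorem.
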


\begin{proof}

Without loss of generality, we may assume that $\Omega = g(\D)$ contains a sector of the form $S_\alpha := c + f_\alpha (\D)$, that $g(0) = c + f_\alpha(0)$ and that $0$ lies outside the closure $\overline{g(\D)}$. Thus, there exists an analytic function $\varphi : \D \rightarrow \D$ with $\varphi(0) = 0$ such that $c + f_\alpha(z) = g \big( \varphi(z)\big)$ for all $z$. Since $g(1) = \infty$, then we necessarily have that $\varphi(1) = 1$ and the Julia-Wolff lemma \cite[Section~4.1]{MR1217706} yields that the angular derivative
\[
\varphi'(1) := \lim_{z \rightarrow 1, z \in \Delta} \dfrac{\varphi(z)-1}{z-1},
\]
where $\Delta$ is a Stolz angle at $1$, exists and $|\varphi'(1)| \in [1, \infty]$. If $z = \varphi(\zeta)$ for some $\zeta \in \D$, we have that
\[
|1 - z|^\alpha |g(z)| = |1 - \varphi(\zeta)|^\alpha |c + f_\alpha (\zeta)|
\]
so
\[
\lim_{z \rightarrow 1, z \in \Delta \cap \varphi(\D)} |1-z|^\alpha |g(z)| = \dfrac{2^{\alpha-1}|\varphi'(1)|^\alpha}{\alpha} > 0.
\]

It remains to analyze the behavior as $z \rightarrow 1$ with $z \in \D \setminus D_\alpha$, where $D_\alpha := \varphi (\D)$. Since $g$ is convex, then its Schwarzian norm $\| Sg \|$ is less than or equal to 2. Because the angle at infinity is positive it follows also that $g(\D)$ is a quasidisk on the Riemann sphere, hence $\| Sg \|<2$ by Theorem 4 in \cite{MR2865506}. We conclude that the function
\[
G = \frac{1}{g} + 1
\]
maps $\D$ onto a bounded quasidisk, in particular, $G(\D)$ is a John domain. If $B(z)$ denotes the closed Carleson box $\left \{\zeta \, : \, |z| \leq |\zeta| \leq 1, \left |\arg \left(\overline{\zeta}z\right) \right| \leq \pi (1-|z|) \right\}$, then there exists a uniform $C > 0$ such that
\[
\operatorname{diam} G \big( B(z) \big) \leq C d \big( G(z), \partial G(\D) \big).
\]
Writing $z = re^{i\theta}$, this estimate implies that
\[
\dfrac{1}{|g(e^{i\theta})|} - \dfrac{1}{|g(z)|} \leq |G(z) - G(e^{i\theta})| \leq C |G(z) - G(1)| = \dfrac{C}{|g(z)|}
\]
and therefore we have that $|g(e^{i\theta})| \geq \frac{|g(z)|}{1 + C}$. Since $|1-z| \sim |1 - e^{i\theta}|$, we conclude that there is a $\delta > 0$ such that
\[
 0 < \delta \leq |1-z|^\alpha |g(z)| \lesssim |1 - e^{i\theta}|^\alpha |g(e^{i\theta})|, \quad z \rightarrow 1, z \in D_\alpha.
\]
A similar argument replacing $e^{i\theta}$ by $\zeta \in \D \setminus D_\alpha$ shows that
\[
 \delta \leq |1 - \zeta|^\alpha |g(\zeta)|, \quad \zeta \rightarrow 1^-,
\]
and we are done.
\end{proof}

On the other hand, it follows from subordination that if the angle at infinity of the unbounded convex region $\Omega=g(\D)$ is $\pi\al$, then for any $\beta > \al$ there exists $K = K(\beta)$ such that
\begin{equation} \label{eqn:UpperBound}
|1-z|^\beta |g(z)| \leq K, \quad z \rightarrow 1.
\end{equation}

An interesting issue is whether there exists a sector of minimal amplitude containing $\Omega$. This will happen only if the vertices of the sectors of a minimizing sequence remain in a bounded region of the plane. For example, a parabolic region has zero angle at infinity but no parallel strip contains it. Simple modifications provide similar examples for positive angle at infinity (for example when $\Omega$ is the intersection of angles of decreasing amplitudes with aligned and divergent vertexes, like the one represented in Figure \ref{fig:VertexInf}), so in principle the choice $\beta = \alpha$ is not allowed.

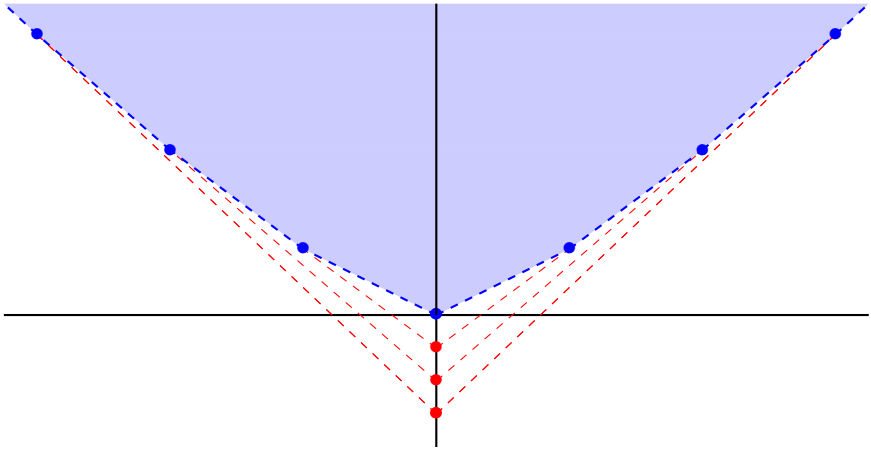
\begin{figure}[h]
\centering
\begin{tikzpicture}[x=1.75cm, y=1.75cm]
\def \maxlevel {3};
\pgfmathsetmacro{\valu}{0};


\draw[black, thick] (-\maxlevel - 1/4,0) -- (\maxlevel + 1/4,0);
\draw[black, thick] (0,-\maxlevel/4-1/4) -- (0,0);

\foreach \i in {1,...,\maxlevel}
{
\pgfmathsetmacro{\vertex}{-\i/4};
\pgfmathsetmacro{\slope}{1-2^(-\i)};
\pgfmathsetmacro{\secondvalue}{\valu + \slope};

\draw[blue] (\i-1,\valu) node{$\bullet$};
\draw[blue] (1-\i,\valu) node{$\bullet$};

\draw[red] (0,\vertex) node{$\bullet$};
\draw[red, dashed] (0,\vertex) -- (\i, \secondvalue);
\draw[red, dashed] (0,\vertex) -- (-\i, \secondvalue);

\draw[blue, dashed, thick] (\i - 1, \valu) -- (\i, \secondvalue);
\draw[blue, dashed, thick] (-\i + 1, \valu) -- (-\i, \secondvalue);

\fill[blue, opacity=0.2] (\i - 1, \valu) -- (\i, \secondvalue) -- (-\i, \secondvalue) -- (-\i + 1, \valu) --cycle;

\global\let\valu = \secondvalue;
}

\draw[red] (0,-\maxlevel/4) node{$\bullet$};
\draw[red, dashed] (0, -\maxlevel/4) -- (\maxlevel, \valu);
\draw[red, dashed] (0,-\maxlevel/4) -- (-\maxlevel, \valu);
\draw[blue] (\maxlevel, \valu) node{$\bullet$};
\draw[blue] (-\maxlevel, \valu) node{$\bullet$};

\pgfmathsetmacro{\slope}{1-2^(-\maxlevel-1)};
\draw[blue, dashed, thick] (\maxlevel, \valu) -- (\maxlevel + 1/4, \valu + \slope/4);
\draw[blue, dashed, thick] (-\maxlevel, \valu) -- (-\maxlevel - 1/4, \valu + \slope/4);
\fill[blue, opacity=0.2] (\maxlevel, \valu) -- (\maxlevel + 1/4, \valu + \slope/4) -- (-\maxlevel - 1/4, \valu + \slope/4) -- (-\maxlevel, \valu) --cycle;

\draw[black, thick] (0,0) -- (0,\valu + \slope/4);
\end{tikzpicture}
\caption{Example of convex set with vertex at infinity.}
\label{fig:VertexInf}
\end{figure}

We state now an analogue of Theorem \ref{thm:FiniteConv}. Although it can be derived from this result and the fact that the convolution of convex mappings preserves subordination (see Theorem~4.1 in \cite{MR0328051}), we present an alternative and self-contained proof.

\begin{theo} \label{unbounded}
Let $g_1, g_2$ be unbounded convex mappings defined in $\D$ with angles $\pi\al, \pi\beta$ at infinity, respectively.

\sm
\noi
(i) If $\al+\beta<1$ then $g_1*g_2$ is bounded.

\sm
\noi
(ii) If $\al+\beta\geq1$ then $g_1*g_2$ is unbounded and the angle at infinity is at least $\pi(\al+\beta-1)$.

$ $ The angle at infinity is equal to $\pi(\al+\beta-1)$ if both regions $g_1(\D), g_2(\D)$ admit sectors

$ $ of minimal amplitude containing them.
 \end{theo}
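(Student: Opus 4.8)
The plan is to sandwich each region $\Omega_i = g_i(\D)$ between sectors and to transport the resulting bounds through the convolution, using that convolution preserves subordination. First I would reduce to a normalized situation: since convolution intertwines with rotations of the variable, $(g_1(\zeta_1\,\cdot)*g_2(\zeta_2\,\cdot))(z) = (g_1*g_2)(\zeta_1\zeta_2 z)$, and boundedness together with the angle at infinity are invariant under rotations, translations and non-zero scalings of the range, I may assume $g_1(1)=g_2(1)=\infty$ (this presupposes that a convex unbounded map with positive angle at infinity sends a single boundary point to $\infty$, which I would record as a preliminary observation). By the definition of the angle at infinity, $\Omega_1$ contains a $2$-gon $S_1^-$ of amplitude $\pi\al$ and, for every $\al'>\al$, is contained in a sector $S_1^+(\al')$ of amplitude $\pi\al'$, and analogously for $\Omega_2$. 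Writing $s_1$ and $F_1(\al')$ for the corresponding Riemann maps normalized by $s_1(0)=F_1(\al')(0)=g_1(0)$, each such map has the form $c+\lambda f_\al(e^{i\psi}z)$ (resp. with $\al'$), and the inclusions give the subordinations $s_1 \prec g_1 \prec F_1(\al')$, and likewise for the second index.

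Next I would push everything through the convolution. By Theorem~\ref{thm:RushShei} and the fact that convolution with a convex map preserves subordination (Theorem~4.1 in \cite{MR0328051}), the relations $s_1\prec g_1$, $s_2\prec g_2$ (with $g_1,g_2$ convex) chain to $s_1*s_2\prec g_1*g_2$, and similarly $g_1*g_2\prec F_1(\al')*F_2(\beta')$. A key simplification is that, by bilinearity, $(c_1+\lambda_1 f_\al(e^{i\psi_1}z))*(c_2+\lambda_2 f_\beta(e^{i\psi_2}z)) = \lambda_1\lambda_2\,(f_\al*f_\beta)(e^{i(\psi_1+\psi_2)}z) + \text{const}$, so each of these $2$-gon convolutions coincides with $f_\al*f_\beta$ up to a non-zero scaling, a rotation of the argument and an additive constant. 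Hence $s_1*s_2$ and $F_1(\al')*F_2(\beta')$ inherit exactly the boundedness status and the angle at infinity of $f_\al*f_\beta$ and $f_{\al'}*f_{\beta'}$, which are supplied by Theorem~\ref{thm:FiniteConv}: bounded when the exponents sum to less than $1$, and otherwise unbounded with angle at infinity $\pi(\al+\beta-1)$ (the endpoint sum $=1$ giving logarithmic growth, that is, angle $0$).

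With this in hand the three assertions follow from two elementary facts, namely that $u\prec v$ forces $u(\D)\subseteq v(\D)$, whence $v$ bounded implies $u$ bounded, and the angle at infinity is monotone under inclusion. For (i), pick $\al'>\al$ and $\beta'>\beta$ with $\al'+\beta'<1$; then $g_1*g_2\prec F_1(\al')*F_2(\beta')$ is bounded, so $g_1*g_2$ is bounded. For (ii) with $\al+\beta\geq1$, the subordination $s_1*s_2\prec g_1*g_2$ exhibits an unbounded function inside $g_1*g_2$, forcing it to be unbounded, and monotonicity of the angle gives angle$(g_1*g_2)\geq\pi(\al+\beta-1)$. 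Finally, if both $\Omega_i$ admit a sector of minimal amplitude containing them, I may take $\al'=\al$ and $\beta'=\beta$ in the upper subordination, so $g_1*g_2\prec F_1(\al)*F_2(\beta)$ has angle $\leq\pi(\al+\beta-1)$; combined with the lower bound this yields equality.

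The step I expect to be most delicate is precisely the upper estimate for the angle at infinity. When a minimal containing sector does not exist, the minimizing sectors have vertices escaping to infinity (as in Figure~\ref{fig:VertexInf}), so only amplitudes $\pi\al'>\pi\al$ are available; this is the manifestation, at the level of convolutions, of the failure of the sharp bound \eqref{eqn:UpperBound} at the endpoint $\beta=\al$, and it is exactly what the minimal-amplitude hypothesis removes. Some care is also needed in the boundary case $\al+\beta=1$, where ``angle at infinity $\pi(\al+\beta-1)$'' must be read as angle $0$ and matched with the logarithmic growth of Theorem~\ref{thm:FiniteConv}(ii), and in verifying the reduction to $g_i(1)=\infty$ used at the outset.
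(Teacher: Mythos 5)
Your argument is correct in outline, but it is a genuinely different proof from the one in the paper: you follow exactly the subordination route that the authors mention and deliberately decline (``Although it can be derived from this result and the fact that the convolution of convex mappings preserves subordination\dots, we present an alternative and self-contained proof''). The paper instead normalizes so that both images contain $(0,\infty)$ with $g_1(1)=g_2(1)=\infty$, writes $(g_1*g_2)(re^{i\theta})$ as an explicit integral in $t$, localizes near $t=0$, and estimates the integrand below by \eqref{eqn:LowerBound} and above by \eqref{eqn:UpperBound}, obtaining $J(r)\gtrsim (1-r)^{1-\al-\beta}$ and the matching upper bound when minimal sectors exist. Your route outsources the hard analysis to the model case $f_\al * f_\beta$ (Theorem \ref{thm:FiniteConv}) and to Ruscheweyh--Sheil-Small's subordination theorem, then transports the conclusions by two monotonicity facts (subordination gives image inclusion; the angle at infinity is monotone under inclusion). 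What you gain is brevity and conceptual clarity; what the paper's proof gains is self-containedness and direct quantitative control of the growth of $|(g_1*g_2)(r)|$, rather than only the bounded/unbounded dichotomy and the angle. Note also that your initial reduction to $g_i(1)=\infty$ is superfluous in your approach, since every fact you use is a statement about the image domains.

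One step does need repair. You assert that the Riemann maps $s_i$ and $F_i(\al')$ normalized by $s_i(0)=F_i(\al')(0)=g_i(0)$ have the form $c+\lambda f_{\al}(e^{i\psi}z)$. As stated this is false: a map of that form sends $0$ to a point on the \emph{axis} of the sector, so if $g_i(0)$ is off-axis the correctly normalized Riemann map is such a map precomposed with a nontrivial disk automorphism, and convolution does not commute with precomposition by automorphisms --- this is precisely where the identification with $f_\al * f_\beta$ would break down. The gap is fixable by choosing the sectors so that their axes pass through $g_i(0)$: for the inner $2$-gon, place the vertex at $g_i(0)-\epsilon e^{i\psi_i}$ for small $\epsilon$ (legitimate because, when the angle at infinity is $\pi\al_i<\pi$, the open recession cone translated to any point of the convex domain remains inside it); for an outer sector --- including a minimal one --- replace it by the sector of the \emph{same} amplitude whose axis is the parallel line through $g_i(0)$, with vertex pushed far enough back that it contains the original sector. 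A second, smaller imprecision: Theorem \ref{thm:FiniteConv} by itself gives only the radial growth of $f_\al*f_\beta$; to convert that growth into the statement that its angle at infinity equals $\pi(\al+\beta-1)$ you must combine it with \eqref{eqn:LowerBound} and \eqref{eqn:UpperBound} (together with convexity of $f_\al*f_\beta$ from Theorem \ref{thm:RushShei}). With these two adjustments your proof goes through.
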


\begin{proof}
We may assume that both $\al, \beta <1$. After a rotation, we may assume that both regions contain the ray $(0,\infty)$, with $g_1(1)=g_2(1)=\infty$ and $0\in g_1(\T), g_2(\T)$ as shown in the Figure \ref{fig:Normalization}.

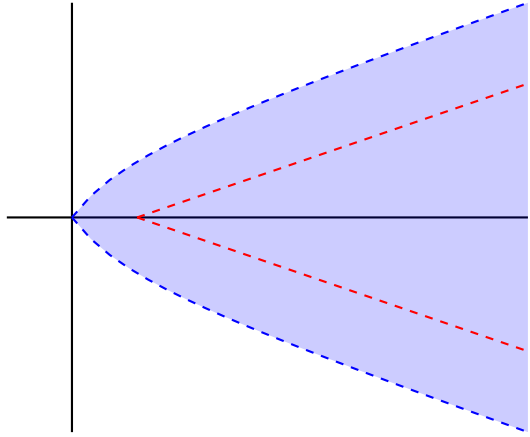
\begin{figure}[h]
\centering
\begin{tikzpicture}

\begin{axis}[axis lines=middle,
            axis line style={-, thick},
            xmin=-1,
            ytick=\empty,
            xtick=\empty]

\addplot[name path=F,blue, thick, dashed, domain={0:7}] {x/3 + 1 - 1/(x+1)};
\addplot[name path=G,blue, thick, dashed, domain={0:7}] {-x/3 - 1 + 1/(x+1)};
\addplot[color=blue, opacity=0.2]fill between[of=F and G, soft clip={domain=0:7}];

\addplot[red, thick, dashed, domain={1:7}] {x/3 - 1/3};
\addplot[red, thick, dashed, domain={1:7}] {-x/3 + 1/3};

\end{axis}

\end{tikzpicture}
\caption{Example of $g_j (\D)$ after the normalization.}
\label{fig:Normalization}
\end{figure}

Using the integral representation of the convolution, we have that
$$
(g_1*g_2)(z)=\frac{1}{2\pi}\int_{-\pi}^{\pi} g_1 \left( \frac{1+r}{2} e^{it} \right)g_2 \left( \frac{2r e^{i\theta}}{1 + r} e^{-it} \right) \, dt, \quad z = r e^{i\theta}.
$$

If $z \ri e^{it_0}\neq 1$ then the integral remains bounded because the factors are each integrable and unbounded at different values of $t$. In other words, $g_1*g_2$ can become unbounded only as $z \ri 1$. Because the convolution is continuous in the spherical metric, we may assume that $z$ is real. For small $\delta>0$ fixed, the integral
$$
\int_{\delta<|t|\leq \pi} g_1 \left(\frac{1+r}{2} e^{it} \right)g_2 \left(\frac{2r}{1+r} e^{-it} \right) \, dt
$$
remains bounded as $r\ri 1$, hence it suffices lo analyze the behavior of
$$
I(r)=\int_{|t|\leq\delta}g_1 \left(\frac{1+r}{2} e^{it} \right)g_2 \left(\frac{2r}{1+r} e^{-it} \right) \, dt
$$
as $r\ri 1$. For this we write $g_1=u_1+iv_1$,  $g_2=u_2+iv_2$, and express
$$
I(r)=\int_{|t|\leq\delta}\left[U(r,t)+iV(r,t)\right] \, dt,
$$
where
$$
U(r,t)=u_1 \left(\frac{1+r}{2} e^{it} \right)u_2\left(\frac{2r}{1+r} e^{-it}\right)-v_1\left(\frac{1 + r}{2} e^{it}\right)v_2\left(\frac{2r}{1+r} e^{-it}\right)
$$
and
$$
V(r,t)=u_1 \left(\frac{1+r}{2} e^{it} \right) v_2 \left(\frac{2r}{1+r} e^{-it} \right)+v_1 \left(\frac{1+r}{2} e^{it}\right)u_2\left(\frac{2r}{1+r} e^{-it} \right).
$$
The normalizations of $g_1, g_2$ imply that $u_1(\frac{1+r}{2}e^{it}), u_2(\frac{2r}{1+r} e^{-it})>0$ while $v_1(\frac{1+r}{2}e^{it})$ and $v_2(\frac{2r}{1+r}e^{-it})$ have the sign of $t$. Furthermore, all four quantities $u_1, u_2, v_1, v_2$ will be comparable in size because the domains contain and are contained in certain sectors.

\sm
For part (i) we use \eqref{eqn:UpperBound} for $g_1$ and $g_2$ with angles $\al'>\al$ and $\beta'>\beta$ and $\al'+\beta'<1$ to conclude that $I(r)$ will stay bounded as $r\ri 1$. For part (ii), we see from the above remarks on the size and signs of the $u_j, v_k$ that it suffices to analyze no only the real part of $I(r)$, but just the integral
$$
J(r)=\int_0^{\delta}u_1\left(\frac{1+r}{2}e^{it}\right)u_2 \left(\frac{2r}{1+r}e^{-it}\right) \, dt
$$
as $r\ri 1$.

Next, a simple calculation shows that
$$
\frac{1}{|1-se^{it}|^{\gamma}}\sim \frac{1}{\left[(1-s)^2+t^2\right]^{\frac{\gamma}{2}}}
$$
as $s\ri 1$, hence by \eqref{eqn:LowerBound} we have
$$
J(r)\gtrsim\int_0^{\delta}\frac{dt}{\left[(1-r)^2+t^2\right]^{\frac{\al+\beta}{2}}}=\frac{1}{(1-r)^{\al+\beta-1}}\int_0^{\delta/(1-r)}\frac{dx}{(1+x^2)^{ \frac{\al+\beta}{2}}} .
$$
This proves that $g_1*g_2$ becomes unbounded as $z\ri 1$ and, because of \eqref{eqn:UpperBound}, that the angle at $(g_1*g_2)(1)=\infty$ cannot be smaller than $\pi(\al+\beta-1)$.
Finally, if both domains admit minimizing sectors containing them, we will also have $J(r)\lesssim(1-r)^{1-\al-\beta}$, showing that the angle at infinity of the convolutions must be $\pi(\al+\beta-1)$.
\end{proof}

\begin{coro}
Let $g_1, \ldots, g_n$ be unbounded convex mappings with angles at infinity $\pi\al_1, \ldots, \pi\al_n$. Then the probability that $g_1*\cdots*g_n$ is unbounded is equal to $1/n!$.
\end{coro}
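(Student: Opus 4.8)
The plan is to first reduce the boundedness of the $n$-fold convolution to an explicit inequality on the angles, and then interpret the resulting region probabilistically as a simplex. Throughout I take the randomness to mean that the angles $\alpha_1, \ldots, \alpha_n$ are chosen independently and uniformly in $(0,1)$.

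First I would iterate Theorem~\ref{unbounded}. Writing $h_k := g_1 * \cdots * g_k$, I claim by induction that $h_k$ is unbounded precisely when $\sigma_k := \sum_{i=1}^k \alpha_i - (k-1) \geq 0$, and that in the unbounded case its angle at infinity equals $\pi\sigma_k$. Note that $\sigma_k < 1$ because each $\alpha_i < 1$, so whenever $\sigma_k \geq 0$ the intermediate function $h_k$ is again an unbounded convex mapping with angle at infinity in $[0,\pi)$, to which the theorem applies. The inductive step treats $h_{k+1} = h_k * g_{k+1}$ as a convolution of two unbounded convex mappings with angles $\pi\sigma_k$ and $\pi\alpha_{k+1}$: by Theorem~\ref{unbounded}, $h_{k+1}$ is bounded iff $\sigma_k + \alpha_{k+1} < 1$, i.e. $\sigma_{k+1} < 0$, and otherwise unbounded with angle $\pi(\sigma_k + \alpha_{k+1} - 1) = \pi\sigma_{k+1}$. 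Since $\alpha_{k+1} < 1$, the sequence $\{\sigma_k\}$ is strictly decreasing, so once some $h_k$ is bounded every later convolution stays bounded; one checks this directly as well, for if $h_k(\D) \subset D(h_k(0), R)$ then $h_k$ is subordinate to the affine map $w \mapsto h_k(0) + Rw$, convolution preserves subordination (Theorem~4.1 in \cite{MR0328051}), and the convolution of an affine map with any power series is again affine, hence bounded. By monotonicity of $\{\sigma_k\}$ the whole chain of conditions collapses to the single requirement $\sigma_n \geq 0$; that is,
\[
g_1 * \cdots * g_n \text{ is unbounded} \iff \sum_{i=1}^n (1 - \alpha_i) \leq 1 .
\]

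Second, with $\beta_i := 1 - \alpha_i$ (again independent and uniform on $(0,1)$), the event above becomes $\beta_1 + \cdots + \beta_n \leq 1$. Its probability is the Lebesgue measure of the standard simplex $\{\beta \in [0,1]^n : \sum_i \beta_i \leq 1\}$ inside the unit cube, which is exactly $1/n!$. The boundary locus $\sum_i (1 - \alpha_i) = 1$ has measure zero and may be discarded, so this volume is precisely the asserted probability.

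The main obstacle is the \emph{bounded} direction of the induction. The ``unbounded'' implication uses only the lower bound for the angle at infinity (the inequality clause of Theorem~\ref{unbounded}(ii)) and goes through for arbitrary convex mappings, since there $\gamma_k \geq \sigma_k$ forces $h_{k+1}$ unbounded whenever $\sigma_{k+1} > 0$. To conclude boundedness when $\sigma_k + \alpha_{k+1} < 1$, however, one needs the angle at infinity of the intermediate convolution $h_k$ to be exactly $\pi\sigma_k$ and not larger, which is the clause of Theorem~\ref{unbounded}(ii) requiring sectors of minimal amplitude; otherwise an overshoot in an intermediate angle could keep the convolution unbounded. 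Thus the delicate point is verifying that the minimizing-sector property propagates through successive convolutions so that no intermediate angle exceeds the predicted value. Granting this for the generic configurations (and outside the measure-zero boundary set $\sum_i(1-\alpha_i)=1$), the simplex computation yields the probability $1/n!$.
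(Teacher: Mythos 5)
Your proposal follows the same route as the paper's own proof: iterate Theorem~\ref{unbounded} to identify unboundedness of $g_1*\cdots*g_n$ with the condition $\al_1+\cdots+\al_n\geq n-1$, then compute the volume of that region in $[0,1]^n$ as $1/n!$. In fact you are more careful than the paper, whose entire argument is ``each stage must be unbounded, otherwise the final convolution is bounded; therefore $\al_1+\al_2\geq 1$, then $\al_1+\al_2+\al_3\geq 2$, and so on.'' Your subordination-plus-affine-map argument is the missing justification that boundedness persists under further convolutions; you treat both inclusions of the event (the paper writes only the necessity direction); and the ``delicate point'' you flag --- that the bounded direction needs the intermediate angle to be \emph{exactly} $\pi\sigma_k$, whereas Theorem~\ref{unbounded}(ii) guarantees equality only under the minimal-sector hypothesis --- is silently assumed in the paper's chain of inequalities as well.

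That point is genuine, but it can be closed with estimates already in the paper, so nothing needs to be ``granted'' and no propagation of minimal sectors is required. Suppose $h=g_1*g_2$ is unbounded, where $g_1,g_2$ have angles $\pi\al,\pi\beta$ at infinity, and let $\pi\gamma$, $\gamma>0$, be the angle at infinity of $h$ (a convex mapping, by the Ruscheweyh--Sheil-Small theorem). The proof of Theorem~\ref{unbounded} shows that $h$ can tend to $\infty$ only at $z=1$; moreover, bounding the integrand by \eqref{eqn:UpperBound} with exponents $\al'>\al$, $\beta'>\beta$, $\al'+\beta'>1$, the same change of variables used there for $J(r)$ gives the radial upper bound $|h(r)|\lesssim (1-r)^{1-\al'-\beta'}$ as $r\to 1$. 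On the other hand, \eqref{eqn:LowerBound} applied to $h$ itself gives $|h(r)|\geq \delta\, (1-r)^{-\gamma}$. Comparing the two forces $\gamma\leq\al'+\beta'-1$, and letting $\al'\downarrow\al$, $\beta'\downarrow\beta$ yields $\gamma\leq\al+\beta-1$. Thus the angle at infinity of the convolution equals $\pi(\al+\beta-1)$ whenever $\al+\beta\geq1$, unconditionally: the minimal-sector clause is needed only for the sharper growth statement $J(r)\lesssim(1-r)^{1-\al-\beta}$, not for the value of the angle. With this, your induction holds for all configurations off the measure-zero boundary set, and the simplex computation gives the stated probability $1/n!$.
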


\begin{proof}
Each stage of the successive convolutions must be unbounded, otherwise the final convolution will be bounded. Therefore, $\al_1+\al_2\geq1$, then $\al_1+\al_2+\al_3\geq2$, and so on.  We conclude that $\al_1+\cdots+\al_n\geq n-1$. A standard calculation shows that the volume of this region in the $n$-cube $[0,1]^n$ is equal to $1/n!$, as claimed.

\end{proof}

\bibliographystyle{amsplain} \bibliography{Biblio}
\end{document}